\documentclass[ejs,preprint]{imsart}

\usepackage{amsthm,amsmath}
\RequirePackage{natbib}
\RequirePackage[colorlinks,citecolor=blue,urlcolor=blue]{hyperref}
\RequirePackage[pdftex]{graphicx}
\bibliographystyle{imsart-nameyear}

\doi{10.1214/154957804100000000}
\pubyear{0000}
\volume{0}
\firstpage{0}
\lastpage{0}

\startlocaldefs
\newcommand{\pteam}{\pi}
\newcommand{\Pteam}{\Pi}
\newcommand{\ptvec}{\boldsymbol{\pteam}}
\newcommand{\Ptvec}{\boldsymbol{\Pteam}}
\newcommand{\lteam}{\lambda}
\newcommand{\Lteam}{\Lambda}
\newcommand{\ltvec}{\boldsymbol{\lteam}}
\newcommand{\Ltvec}{\boldsymbol{\Lteam}}
\newcommand{\Zeta}{\mathrm{Z}}
\newcommand{\ppair}{\theta}
\newcommand{\Ppair}{\Theta}
\newcommand{\ppvec}{\boldsymbol{\ppair}}
\newcommand{\Ppvec}{\boldsymbol{\Ppair}}
\newcommand{\lpair}{\gamma}
\newcommand{\Lpair}{\Gamma}
\newcommand{\lpvec}{\boldsymbol{\lpair}}
\newcommand{\Lpvec}{\boldsymbol{\Lpair}}
\newcommand{\wwin}{w}

\newcommand{\nnum}{n}

\newcommand{\nnumvec}{\mathbf{\nnum}}

\newcommand{\win}{v}

\newcommand{\nt}{t}
\newcommand{\zerovec}{\mathbf{0}}

\newcommand{\mc}[1]{\mathcal{#1}}
\newcommand{\abs}[1]{\left\lvert#1\right\rvert}

\newcommand{\sref}[1]{Sec.~\ref{#1}}

\newcommand{\fref}[1]{Fig.~\ref{#1}}

\newcommand{\coord}{co\"{o}rdinate}

\numberwithin{equation}{section}
\theoremstyle{plain}
\newtheorem{lem}{Lemma}[section]
\newtheorem{defn}{Definition}[section]
\newtheorem{des}{Desideratum}[section]
\endlocaldefs
\begin{document}

\begin{frontmatter}

\title{Prior Distributions for the Bradley-Terry Model of Paired Comparisons}
\runtitle{Priors for Bradley-Terry}

\author{\fnms{John T.} \snm{Whelan}\ead[label=e1]{jtwsma@rit.edu}}
\address{School of Mathematical Sciences and\\
Center for Computational Relativity and Gravitation\\
Rochester Institute of Technology\\
Rochester, NY 14623}
\address{Max-Planck-Institut f\"{u}r Gravitationsphysik\\
(Albert-Einstein-Institut)\\
D-30167 Hannover, Germany\\
\printead{e1}
}

\runauthor{John T.\ Whelan}

\begin{abstract}
  The Bradley-Terry model assigns probabilities for the outcome of
  paired comparison experiments based on strength parameters
  associated with the objects being compared.  We consider different
  proposed choices of prior parameter distributions for Bayesian
  inference of the strength parameters based on the paired comparison
  results.  We evaluate them according to four desiderata motivated by
  the use of inferred Bradley-Terry parameters to rate teams on the
  basis of outcomes of a set of games: invariance under interchange of
  teams, invariance under interchange of winning and losing,
  normalizability and invariance under elimination of teams.  We
  consider various proposals which fail to satisfy one or more of
  these desiderata, and illustrate two proposals which satisfy them.
  Both are one-parameter independent distributions for the logarithms
  of the team strengths: 1) Gaussian and 2) Type III generalized
  logistic.
\end{abstract}

\begin{keyword}[class=MSC]
\kwd{62F15}
\end{keyword}

\begin{keyword}
\kwd{Bayesian inference}
\kwd{paired comparisons}
\kwd{Bradley-Terry model}
\end{keyword}

\end{frontmatter}

\section{Background}

\subsection{Paired Comparison Experiments and the Bradley-Terry Model}

A paired comparison experiment is a set of binary comparisons between
pairs out of a set of $\nt$ objects.  The Bradley-Terry model
\citep{Zermelo1929,BRADLEY01121952} assigns to each object $i$
($i=1,\ldots \nt$) a strength parameter $\pteam_i$, and defines
\begin{equation}
  \ppair_{ij} = \frac{\pteam_i}{\pteam_i+\pteam_j}
\end{equation}
as the probability that object $i$ will be preferred in any given
comparison with object $j$.  Note that $\ppair_{ji}=1-\ppair_{ij}$.
If $\nnum_{ij}$ the number of comparisons between $i$ and $j$, the
probability of any particular set of outcomes $D$ which includes
object $i$ being chosen over $j$ $\wwin_{ij}$ times is
\begin{equation}
  \label{e:likelihood}
  p(D|\{\ppair_{ij}\})
  = p(D|\{\pteam_i\})
  = \prod_{i=1}^{\nt} \prod_{j=i+1}^{\nt}
  \ppair_{ij}^{\wwin_{ij}} (1-\ppair_{ij})^{\nnum_{ij}-\wwin_{ij}}
\end{equation}
The model has been used in a number of contexts, ranging from taste
tests between different foods to games between chess players.  In the
context of the present paper, we are interested in sporting
competitions, so we will henceforth refer to the objects as ``teams''
and the comparisons as ``games''.  $\wwin_{ij}$ is thus the number of
games won by team $i$ against team $j$, and
$\nnum_{ij}=\wwin_{ij}+\wwin_{ji}$ is the number of games between
them.

\subsection{Bayesian Approach}

A typical problem is to make inferences about the strengths
$\{\pteam_i\}$, or equivalently the log-strengths $\{\lteam_i\}$,
given the results $D$.  Under a Bayesian approach, in terms of the
vector $\ptvec$ of team strengths $\{\pteam_i|i=1,\ldots,\nt\}$, the
posterior probability distribution will be, up to a
$\ptvec$-independent normalization constant,
\begin{equation}
  f_{\Ptvec|D}(\ptvec|D,I) \propto p(D|\{\pteam_i\}) f_{\Ptvec}(\ptvec|I)
\end{equation}
We are concerned with choices of the prior distribution
$f_{\Ptvec}(\ptvec|I)$, with a given choice represented symbolically
by $I$.

It is useful to define $\lteam_i=\ln\pteam_i$ and note that
\begin{equation}
  \ln\frac{\ppair_{ij}}{1-\ppair_{ij}} = \lteam_i-\lteam_j
  =: \lpair_{ij}
\end{equation}
and
\begin{equation}
  \ppair_{ij} = (1+e^{-\lpair_{ij}})^{-1},
  \qquad
  (1-\ppair_{ij}) = (1+e^{\lpair_{ij}})^{-1}
\end{equation}
Since the parameters are continuous, the probability density functions
transform as follows:
\begin{equation}
  \label{e:fltpt}
  f_{\Lteam_i}(\lteam_i) = e^{\lteam_i} f_{\Pteam_i}(e^{\lteam_i})
\end{equation}
\begin{equation}
  \label{e:fptlt}
  f_{\Pteam_i}(\pteam_i) = \frac{1}{\pteam_i} f_{\Lteam_i}(\ln\pteam_i)
\end{equation}
and likewise
\begin{equation}
  \label{e:flppp}
  f_{\Lpair_{ij}}(\lpair_{ij})
  = (1+e^{-\lpair_{ij}})^{-1}(1+e^{\lpair_{ij}})^{-1}
  f_{\Ppair_{ij}}([1+e^{-\lpair_{ij}}]^{-1})
\end{equation}
\begin{equation}
  \label{e:fpplp}
  f_{\Ppair_{ij}}(\ppair_{ij})
  = \ppair_{ij}^{-1}(1-\ppair_{ij})^{-1}
  \,f_{\Lpair_{ij}}(-\ln[\ppair_{ij}^{-1}-1])
\end{equation}

Note that the $\nt$ strengths $\{\pteam_i\}$ are only relevant in their
use to determine the probabilities $\{\ppair_{ij}\}$ (of which $\nt-1$
are independent), so we consider two probability distributions
$f_{\Ptvec}(\ptvec|I_1)$ and $f_{\Ptvec}(\ptvec|I_2)$ equivalent if
they produce the same marginalized distribution for the
$\{\ppair_{ij}\}$.
\begin{defn}
  Let $\lpvec$ represent a $\nt-1$-dimensional vector of linearly
  independent combinations of the log-odds-ratios $\{\lpair_{ij}\}$
  from which all $\nt(\nt-1)$ can be constructed according to
  \begin{equation}
    \label{e:lptbasis}
    \lpair_{ij} = \sum_{\alpha=1}^{\nt-1} C_{ij,\alpha} \lpair_{\alpha}
  \end{equation}
\end{defn}
Possible choices are
\begin{equation}
  \lpair_{12}, \lpair_{23}, \ldots, \lpair_{(\nt-1),\nt}
\end{equation}
or
\begin{equation}
  \lpair_{1\nt}, \lpair_{2\nt}, \ldots, \lpair_{(\nt-1),\nt}
\end{equation}
or
\begin{equation}
  \frac{1}{\sqrt{2}}\lpair_{12}, \frac{1}{\sqrt{6}}(\lpair_{13}+\lpair_{23}),
  \ldots,
  \frac{1}{\sqrt{\nt(\nt-1)}}[\lpair_{12}+\lpair_{23}
  +\ldots-(t-1)\lpair_{(\nt-1),\nt}]
\end{equation}
The advantage of working with the $\{\lpair_{ij}\}$ is that we need
not specify which basis we are using for $\lpvec$ because the Jacobian
determinants for transformations between different bases are constant.
\begin{defn}
  Two probability distributions are equivalent,
  $f_{\Ptvec}(\ptvec|I_1)\cong f_{\Ptvec}(\ptvec|I_2)$ (or
  $f_{\Ltvec}(\ltvec|I_1)\cong f_{\Ltvec}(\ltvec|I_2)$) if and only if
  $f_{\Lpvec}(\lpvec|I_1) = f_{\Lpvec}(\lpvec|I_2)$.
\end{defn}
\begin{lem}
  \label{l:scaling}
  A sufficient condition for $f_{\Ptvec}(\ptvec|I_1)\cong
  f_{\Ptvec}(\ptvec|I_2)$ is that there exists a scalar function
  $C(\ptvec)$ such that the transformation
  \begin{equation}
    \ptvec' = \ptvec C(\ptvec)
  \end{equation}
  converts the probability density $f_{\Ptvec}(\ptvec|I_1)$ into
  $f_{\Ptvec}(\ptvec|I_2)$, i.e.,
  \begin{equation}
    f_{\Ptvec'}(\ptvec'|I_1) = \frac{f_{\Ptvec}(\ptvec|I_1)}
    {
      \det\left\{\frac{\partial\pteam'_i}{\partial\pteam_j}\right\}
    }
    = f_{\Ptvec}(\ptvec'|I_2)
  \end{equation}
\end{lem}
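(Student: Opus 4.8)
The plan is to recast the scaling transformation in log-coordinates, where it becomes a uniform translation, and then to show that marginalizing over the direction of that translation is unaffected. First I would rewrite the hypothesis in terms of the log-strengths $\ltvec$. Under $\lteam_i=\ln\pteam_i$, the componentwise scaling $\pteam_i'=\pteam_i\,C(\ptvec)$ becomes the common shift
\begin{equation}
  \lteam_i' = \lteam_i + \ln C(\ptvec),
\end{equation}
i.e. a translation of $\ltvec$ by $\ln C\,(1,\ldots,1)$ along the ``diagonal'' direction. The crucial observation is that every log-odds-ratio is invariant under this map, since $\lpair_{ij}'=\lteam_i'-\lteam_j'=\lteam_i-\lteam_j=\lpair_{ij}$, and hence, by the linearity of \eqref{e:lptbasis}, every basis component $\lpair_\alpha$ and the whole vector $\lpvec$ is left unchanged.

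Next I would introduce adapted coordinates $(\lpvec,u)$ on $\ltvec$-space, where $u$ is any scale coordinate running along the diagonal direction on which the linear map $\ltvec\mapsto\lpvec$ is constant. Because changes between such linear coordinate systems have constant Jacobian (as already noted for changes of $\lpvec$-basis), I may pass freely between $f_{\Ptvec}$, $f_{\Ltvec}$ and the joint density $\tilde f(\lpvec,u)$ via \eqref{e:fltpt}--\eqref{e:fptlt}. In these coordinates the transformation is triangular: it fixes $\lpvec$ and sends $u\mapsto u'=u+\ln C$, so its Jacobian determinant reduces to the single factor $\partial u'/\partial u$. The hypothesis, which asserts that the push-forward of $f_{\Ptvec}(\cdot|I_1)$ equals $f_{\Ptvec}(\cdot|I_2)$, then reads
\begin{equation}
  \tilde f_{I_2}(\lpvec,u') = \tilde f_{I_1}(\lpvec,u)\,\Big/\,\frac{\partial u'}{\partial u}
  \qquad\text{at fixed }\lpvec .
\end{equation}

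Finally I would marginalize out the scale coordinate. Since $f_{\Lpvec}(\lpvec|I)=\int \tilde f_{I}(\lpvec,u)\,du$ and the transformation carries each fiber $\{\lpvec=\text{const}\}$ into itself, the one-dimensional change of variables $u'=u+\ln C$ at fixed $\lpvec$ yields
\begin{equation}
  f_{\Lpvec}(\lpvec|I_2) = \int \tilde f_{I_2}(\lpvec,u')\,du' = \int \tilde f_{I_1}(\lpvec,u)\,du = f_{\Lpvec}(\lpvec|I_1),
\end{equation}
which is precisely the equivalence $f_{\Ptvec}(\ptvec|I_1)\cong f_{\Ptvec}(\ptvec|I_2)$. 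Equivalently, and more cleanly, one can argue probabilistically: if $\Ptvec\sim f_{\Ptvec}(\cdot|I_1)$ then the hypothesis says $\Ptvec'=\Ptvec\,C(\Ptvec)\sim f_{\Ptvec}(\cdot|I_2)$, while the deterministic map taking strengths to the basis vector satisfies $\lpvec(\Ptvec')=\lpvec(\Ptvec)$ pointwise; equal random vectors have equal laws, giving the claim with no Jacobian bookkeeping. The one point I expect to require care—the main obstacle—is justifying that marginalization commutes with the fiber-preserving push-forward: one must check that $u\mapsto u'$ is a genuine bijection of each fiber (implicit in the assumption that the transformation ``converts'' one density into the other, i.e. that it is a global diffeomorphism) so that the substitution is valid, and that the scale integral converges. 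The probabilistic phrasing makes this transparent, since it uses only that $\lpvec$ is constant along the scaling orbits.
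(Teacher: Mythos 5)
Your proposal is correct and follows essentially the same route as the paper: both proofs rest on the single observation that the scaling fixes every $\lpair_{ij}$ (the paper checks this via $\ppair'_{ij}=\ppair_{ij}$, you via the diagonal translation of $\ltvec$), and both then conclude that the marginal distribution of $\lpvec$ is unaffected. Your explicit $(\lpvec,u)$ fiber decomposition with its Jacobian bookkeeping, and your closing probabilistic remark, are respectively an expanded and a condensed rendering of the paper's chain of equalities $f_{\Lpvec}(\lpvec|I_2)=f_{\Lpvec}(\lpvec'|I_2)=f_{\Lpvec'}(\lpvec'|I_1)=f_{\Lpvec}(\lpvec|I_1)$.
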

\begin{proof}  The transformation leaves $\ppair_{ij}$ unchanged
  \begin{equation}
    \ppair'_{ij} = \frac{\pteam'_i}{\pteam'_i+\pteam'_j}
    = \frac{\pteam_iC(\ptvec)}{\pteam_iC(\ptvec)+\pteam_jC(\ptvec)}
    = \frac{\pteam_i}{\pteam_i+\pteam_j} = \ppair_{ij}
  \end{equation}
  and therefore $\lpair'_{ij}=\lpair_{ij}$ and the transformation
  $\lpvec\rightarrow\lpvec'$ leaves $f_{\Lpvec}(\lpvec|I)$ unchanged,
  and
  \begin{equation}
    f_{\Lpvec}(\lpvec|I_2)
    = f_{\Lpvec}(\lpvec'|I_2)
    = f_{\Lpvec'}(\lpvec'|I_1) = f_{\Lpvec}(\lpvec|I_1)
  \end{equation}
\end{proof}

\subsection{Motivation and Desiderata}

\label{s:motivation}

The primary interest motivating this work is design of rating systems
to evaluate teams based on the outcome of games between them.  To that
end, prior information which distinguishes between the teams is
inappropriate, as it would be considered ``unfair'' to build such
information into the rating system.  We are interested in rating
systems which obey as many as possibly of the following desiderata.
\begin{des}
  Invariance under interchange of teams. \label{d:interchange} A
  transformation $\ptvec\rightarrow\ptvec'$ which, for some $i$ and
  $j$, obeys $\pteam'_i=\pteam_j$, $\pteam'_j=\pteam_i$,
  $\pteam'_k=\pteam_k$ for all other $k$, should transform
  $f_{\Ptvec}(\ptvec|I_1)$ into an equivalent distribution
  $f_{\Ptvec}(\ptvec|I_2)\cong f_{\Ptvec}(\ptvec|I_1)$.
\end{des}
\begin{des}
  Invariance under interchange of winning and
  losing. \label{d:reflection} The transformation $\forall i:
  \pteam_i\rightarrow\pteam_i'=\frac{1}{\pteam_i}$, which corresponds
  to $\ltvec'=-\ltvec$, $\forall i,j:\ppair'_{ij}=1-\ppair_{ij}$, and
  $\lpvec'=-\lpvec$, should transform $f_{\Ptvec}(\ptvec|I_1)$ into an
  equivalent distribution $f_{\Ptvec}(\ptvec|I_2)\cong
  f_{\Ptvec}(\ptvec|I_1)$.  A distribution obeying this desideratum
  will satisfy $f_{\Lpvec}(\lpvec|I_1)=f_{\Lpvec}(-\lpvec|I_1)$.
\end{des}
\begin{des}
  Normalizability. \label{d:proper} $f_{\Lpvec}(\lpvec|I)$ should be
  a proper prior, which can be normalized to
  $\int_{-\infty}^{\infty}\cdots\int_{-\infty}^{\infty}
  d^{\nt-1}\lpair\,f_{\Lpvec}(\lpvec|I)=1$.
\end{des}
\begin{des}
  Invariant under elimination of teams. \label{d:elim} This
  desideratum assumes that a given principle can be used to generate
  prior distributions for any number of teams.  Let $\nt>2$, and
  define $\ptvec$ to be the vector of $\nt$ strengths, and $\ptvec'$
  to be the $(\nt-1)$-element vector with $\pteam'_i=\pteam_i$ for
  $i=0,\ldots,\nt-1$.  Suppose the principle generates priors
  $f_{\Ptvec'}(\ptvec'|I_{\nt-1})$ when there are $\nt-1$ teams and
  $f_{\Ptvec}(\ptvec|I_{\nt}) =
  f_{\Ptvec',\Pteam_{\nt}}(\ptvec',\pteam_{\nt}|I_{\nt})$ when there
  are $\nt$.  The prior associated with $I_{\nt-1}$ should be
  equivalent to that associated with $I_{\nt}$, marginalized over
  $\pteam_{\nt}$, i.e.
  \begin{equation}
    f_{\Ptvec'}(\ptvec'|I_{\nt-1})
    \cong
    \int_0^{\infty}d\pteam_{\nt}
    \,f_{\Ptvec',\Pteam_{\nt}}(\ptvec',\pteam_{\nt}|I_{\nt})
    = \int_0^{\infty}d\pteam_{\nt}
    \,f_{\Ptvec}(\ptvec|I_{\nt})
  \end{equation}
\end{des}

\subsection{Comparison via Prior Predictive Distribution}

A convenient way to quantify the effects of a prior, and thus to
compare different priors, is to construct the prior predictive
distribution
\begin{equation}
  \label{e:priorpred}
  p(D|\nnumvec,I)
  = \int_{0}^{\infty}\cdots \int_{0}^{\infty}
  d^{\nt}\pteam\,
  p(D|\ptvec,\nnumvec)\,f_{\Ptvec}(\ptvec|I)
  = \int_{-\infty}^{\infty}\cdots \int_{-\infty}^{\infty}
  d^{t-1}\lpair\, p(D|\lpvec,\nnumvec)\,f_{\Lpvec}(\lpvec|I)
\end{equation}
where the second equality holds because the $\nt-1$ log-rating
differences in $\lpvec$ determine the sampling distribution
$p(D|\ptvec,\nnumvec)$.
\begin{lem}
  \label{l:pred}
  For any $\nnumvec$, $p(D|\nnumvec,I)=p(D|\nnumvec,I')$ is a necessary
  condition for $f_{\Ptvec}(\ptvec|I)\cong f_{\Ptvec}(\ptvec|I')$.
\end{lem}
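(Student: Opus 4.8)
The plan is to exploit the second form of the prior predictive distribution in \eqref{e:priorpred}, which exhibits $p(D|\nnumvec,I)$ as a functional of the prior through $f_{\Lpvec}(\lpvec|I)$ alone; since equivalence of priors is \emph{defined} as equality of these $f_{\Lpvec}$, the predictives must coincide. Because the lemma asserts only a necessary condition, it suffices to prove the implication $f_{\Ptvec}(\ptvec|I)\cong f_{\Ptvec}(\ptvec|I')\ \Rightarrow\ p(D|\nnumvec,I)=p(D|\nnumvec,I')$ for every $\nnumvec$, and I would not attempt (nor need) the converse.

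The one step underpinning \eqref{e:priorpred}, already noted in the text, is that the sampling distribution $p(D|\ptvec,\nnumvec)$ depends on $\ptvec$ only through $\lpvec$, so that writing $p(D|\lpvec,\nnumvec)$ is legitimate. I would make this explicit from the likelihood \eqref{e:likelihood}: it is assembled entirely from the probabilities $\{\ppair_{ij}\}$, each of which equals $(1+e^{-\lpair_{ij}})^{-1}$ and hence is fixed by $\lpair_{ij}$; and by \eqref{e:lptbasis} every $\lpair_{ij}$ is a fixed linear combination of the components of $\lpvec$. Thus the integrand in the second equality of \eqref{e:priorpred} factors as $p(D|\lpvec,\nnumvec)\,f_{\Lpvec}(\lpvec|I)$, with the first factor independent of which prior is used.

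To finish, I would invoke the definition of equivalence, $f_{\Lpvec}(\lpvec|I)=f_{\Lpvec}(\lpvec|I')$, and substitute it into \eqref{e:priorpred}. The two integrands then agree pointwise in $\lpvec$, so their integrals over the $(\nt-1)$-dimensional $\lpvec$-space are equal, giving $p(D|\nnumvec,I)=p(D|\nnumvec,I')$; since the argument never used any special property of $\nnumvec$ or $D$, the equality holds for all of them. There is no genuine analytic obstacle here — the entire content is the observation, already packaged in \eqref{e:priorpred}, that the predictive sees the prior only through its $\lpvec$-marginal, so the main point requiring care is simply the bookkeeping that confirms $p(D|\lpvec,\nnumvec)$ carries no residual dependence on the overall scale of $\ptvec$.
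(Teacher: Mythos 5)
Your proposal is correct and follows essentially the same route as the paper's own proof: assume equivalence, invoke the definition to get $f_{\Lpvec}(\lpvec|I)=f_{\Lpvec}(\lpvec|I')$, and note via \eqref{e:priorpred} that the predictive depends on the prior only through $f_{\Lpvec}$. The only difference is that you spell out the justification (already stated in the text after \eqref{e:priorpred}) that the likelihood depends on $\ptvec$ only through $\lpvec$, which is a harmless elaboration rather than a new idea.
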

\begin{proof}
  Assume $f_{\Ptvec}(\ptvec|I)\cong f_{\Ptvec}(\ptvec|I')$.  Then, by
  definition $f_{\Lpvec}(\lpvec|I)= f_{\Lpvec}(\lpvec|I')$.  By
  \eqref{e:priorpred}, $p(D|\nnumvec,I)$ can be constructed from
  $f_{\Lpvec}(\lpvec|I)$ and $p(D|\lpvec,\nnumvec)$, and therefore
  $f_{\Lpvec}(\lpvec|I)= f_{\Lpvec}(\lpvec|I')$ implies
  $p(D|\nnumvec,I)=p(D|\nnumvec,I')$.
\end{proof}
We can use the prior predictive distribution to check the desiderata.
\begin{lem}
  \label{l:predinterchange}
  Given a $\nnumvec$ where $\nnum_{k\ell}=\nnum$ for all $k\ne\ell$,
  desideratum~\ref{d:interchange} implies
  $p(D|\nnumvec,I)=p(D'|\nnumvec,I)$ where $D$ and $D'$ differ only by
  the interchange of a pair of teams $i$ and $j$, i.e.,
  $\wwin'_{ij}=\wwin_{ji}$, $\wwin'_{ik}=\wwin_{jk}$, and
  $\wwin'_{jk}=\wwin_{ik}$, for all $k\not\in\{i,j\}$.
\end{lem}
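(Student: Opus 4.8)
The goal is to show that Desideratum~\ref{d:interchange} forces the prior predictive distribution $p(D|\nnumvec,I)$ to be symmetric under swapping the roles of two teams $i$ and $j$ in the data, whenever the schedule $\nnumvec$ is itself symmetric (all $\nnum_{k\ell}=\nnum$). The strategy is to combine three ingredients already available: the defining integral~\eqref{e:priorpred} for the prior predictive, the equivalence of priors guaranteed by Desideratum~\ref{d:interchange}, and Lemma~\ref{l:pred}, which says equivalent priors yield identical prior predictives. The key observation to establish is that interchanging teams $i$ and $j$ in the \emph{data} is compensated exactly by interchanging them in the \emph{parameters}, so that the integral defining $p(D'|\nnumvec,I)$ can be rewritten as the integral defining $p(D|\nnumvec,I)$ but with the swapped prior.

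\textbf{Main steps.}
First I would write out $p(D'|\nnumvec,I)$ using~\eqref{e:priorpred} and examine the likelihood factor $p(D'|\ptvec,\nnumvec)$. Using the explicit form~\eqref{e:likelihood}, I would check that the likelihood of the swapped data $D'$ evaluated at $\ptvec$ equals the likelihood of the original data $D$ evaluated at the parameter vector $\ptvec'$ obtained by interchanging $\pteam_i\leftrightarrow\pteam_j$: since $\wwin'_{ij}=\wwin_{ji}$, $\wwin'_{ik}=\wwin_{jk}$, $\wwin'_{jk}=\wwin_{ik}$, and each $\ppair_{k\ell}$ depends on the two strengths being compared, the swap in the win-counts is undone by the swap in the strengths. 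Here the hypothesis $\nnum_{k\ell}=\nnum$ for all $k\neq\ell$ is essential, because it makes the total game counts symmetric so that the exponents $\nnum_{k\ell}-\wwin_{k\ell}$ track the swap correctly; with an asymmetric schedule the compensation would fail. This gives the identity $p(D'|\ptvec,\nnumvec)=p(D|\ptvec',\nnumvec)$.

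Next I would change variables in the integral for $p(D'|\nnumvec,I)$ from $\ptvec$ to $\ptvec'$. The interchange map is its own inverse, a permutation of \coord{}s, so its Jacobian has absolute value one and the domain $(0,\infty)^{\nt}$ is preserved. This converts $p(D'|\nnumvec,I)=\int d^{\nt}\pteam\,p(D|\ptvec',\nnumvec)\,f_{\Ptvec}(\ptvec|I)$ into $\int d^{\nt}\pteam'\,p(D|\ptvec',\nnumvec)\,f_{\Ptvec}(\ptvec'|I_{\mathrm{swap}})$, where $f_{\Ptvec}(\cdot|I_{\mathrm{swap}})$ is precisely the prior obtained by applying the team-interchange transformation of Desideratum~\ref{d:interchange}. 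By that desideratum this swapped prior is equivalent to the original, $f_{\Ptvec}(\ptvec|I_{\mathrm{swap}})\cong f_{\Ptvec}(\ptvec|I)$.

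\textbf{Closing the argument.}
Finally I would apply Lemma~\ref{l:pred}: equivalence of $f_{\Ptvec}(\cdot|I_{\mathrm{swap}})$ and $f_{\Ptvec}(\cdot|I)$ implies their prior predictives agree for every schedule, in particular for our symmetric $\nnumvec$. The rewritten integral is exactly the prior predictive of $D$ under the swapped prior, hence equals the prior predictive of $D$ under $I$, giving $p(D'|\nnumvec,I)=p(D|\nnumvec,I)$, as desired. I expect the main obstacle to be the bookkeeping in the first step: verifying carefully that the index relabeling in the product~\eqref{e:likelihood}, which runs only over pairs with $i<j$, matches up with the parameter swap and that no factors are double-counted or dropped when $k$ ranges over teams other than $i$ and $j$. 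The change of variables and the appeal to Lemma~\ref{l:pred} are then routine.
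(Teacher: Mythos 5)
Your proof is correct, and its engine is the same as the paper's: the swap in the data is compensated by a swap in the parameters (with the hypothesis $\nnum_{k\ell}=\nnum$ needed exactly as you say, to guarantee $\nnumvec'=\nnumvec$ so that $D'$ remains consistent with the same schedule), followed by a unit-Jacobian change of variables and an appeal to Desideratum~\ref{d:interchange}. The difference is in how you close. The paper stays in the log-odds variables $\lpvec$: it reads Desideratum~\ref{d:interchange} as the invariance $f_{\Lpvec}(\lpvec'|I)=f_{\Lpvec}(\lpvec|I)$ and substitutes this directly inside the integral \eqref{e:priorpred}, never invoking Lemma~\ref{l:pred}. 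You instead work in the strength variables $\ptvec$, where the swap is a literal permutation with unit Jacobian, recognize the transformed integral as the prior predictive $p(D|\nnumvec,I_{\mathrm{swap}})$ under the pushed-forward prior, and then use Lemma~\ref{l:pred} to convert the equivalence $I_{\mathrm{swap}}\cong I$ supplied by the desideratum into equality of prior predictives. Your factorization buys something: the equivalence relation $\cong$, which is defined through the marginal distribution of $\lpvec$, never has to be unpacked, since Lemma~\ref{l:pred} already did that work, and the unit Jacobian is immediate for a permutation; the cost is the extra dependence on Lemma~\ref{l:pred}, whereas the paper's version is self-contained in the parameterization where equivalence lives. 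Both are sound, and the bookkeeping you flag --- matching the product over pairs $i<j$ in \eqref{e:likelihood} under the relabeling --- is real but routine, exactly as you anticipate.
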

\begin{proof}
  Defining $\nnum'_{k\ell}=\wwin'_{k\ell}+\wwin'_{\ell k}$ we see that
  for all $k\ne\ell$, $\nnum'_{k\ell}=\nnum=\nnum_{k\ell}$, i.e.,
  $\nnumvec'=\nnumvec$.  If we define $\ptvec\rightarrow\ptvec'$ as in
  the statement of desideratum~\ref{d:interchange}
  ($\pteam'_i=\pteam_j$, $\pteam'_j=\pteam_i$, $\pteam'_k=\pteam_k$
  for all $k\ne i,j$), we can see $p(D'|\lpvec',\nnumvec,I) =
  p(D'|\ptvec',\nnumvec,I) = p(D'|\ptvec',\nnumvec',I) = p(D|\ptvec,\nnumvec,I)
  = p(D|\lpvec,\nnumvec,I)$ where as usual
  $\lpair'_{ij}=\ln(\pteam'_i/\pteam'_j)$.  If
  desideratum~\ref{d:interchange} holds, we have
  $f_{\Lpvec'}(\lpvec'|I) = f_{\Lpvec}(\lpvec|I)$ and so
  \begin{equation}
    \begin{split}
      p(D'|\nnumvec,I)
      &= \int_{-\infty}^{\infty}\cdots \int_{-\infty}^{\infty}
      d^{t-1}\lpair'\, p(D'|\lpvec',\nnumvec)\,f_{\Lpvec}(\lpvec'|I)
      \\
      &= \int_{-\infty}^{\infty}\cdots \int_{-\infty}^{\infty}
      d^{t-1}\lpair'\, p(D|\lpvec,\nnumvec)\,f_{\Lpvec}(\lpvec|I)
      \\
      &= \int_{-\infty}^{\infty}\cdots \int_{-\infty}^{\infty}
      d^{t-1}\lpair\, p(D|\lpvec,\nnumvec)\,f_{\Lpvec}(\lpvec|I)
      = p(D|\nnumvec,I)
    \end{split}
  \end{equation}
  because the change of variables $\lpvec\rightarrow\lpvec'$ has unit
  Jacobian determinant and leaves the range of the integration
  variables unchanged.
\end{proof}
\begin{lem}
  For any $\nnumvec$, desideratum~\ref{d:reflection} implies
  $p(D|\nnumvec,I)=p(D'|\nnumvec,I)$ where
  $\wwin'_{ij}=\nnum_{ij}-\wwin_{ij}$.
\end{lem}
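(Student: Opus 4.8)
The plan is to mirror the argument used for Lemma~\ref{l:predinterchange}, replacing the team-interchange map by the winning/losing reflection $\lpvec\to\lpvec'=-\lpvec$. First I would record that the reversed data set leaves the game totals unchanged: since $\wwin'_{ij}=\nnum_{ij}-\wwin_{ij}=\wwin_{ji}$ and likewise $\wwin'_{ji}=\wwin_{ij}$, we have $\nnum'_{ij}=\wwin'_{ij}+\wwin'_{ji}=\nnum_{ij}$, so $\nnumvec'=\nnumvec$ and the prior predictive integral for $D'$ is taken with the same $\nnumvec$.

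The central identity to establish is that the sampling distribution for the reversed data at the reflected parameters equals the original sampling distribution, i.e.\ $p(D'|\lpvec',\nnumvec)=p(D|\lpvec,\nnumvec)$ when $\lpvec'=-\lpvec$. I would obtain this directly from the likelihood~\eqref{e:likelihood}: under the reflection, $\ppair'_{ij}=1-\ppair_{ij}$ (equivalently $\lpvec'=-\lpvec$), so in each factor the roles of $\ppair_{ij}$ and $1-\ppair_{ij}$ are exchanged; simultaneously the exponents are exchanged because $\wwin'_{ij}=\nnum_{ij}-\wwin_{ij}$. The two exchanges cancel factor by factor, leaving $\prod_{i<j}\ppair_{ij}^{\wwin_{ij}}(1-\ppair_{ij})^{\nnum_{ij}-\wwin_{ij}}=p(D|\lpvec,\nnumvec)$. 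Equivalently, $p(D'|\lpvec,\nnumvec)=p(D|-\lpvec,\nnumvec)$.

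With these pieces I would write $p(D'|\nnumvec,I)=\int d^{t-1}\lpair\,p(D'|\lpvec,\nnumvec)\,f_{\Lpvec}(\lpvec|I)$ from~\eqref{e:priorpred}, substitute $\lpvec\to-\lpvec$ (a change of variables with unit Jacobian determinant in absolute value that leaves the integration range $(-\infty,\infty)^{\nt-1}$ unchanged), and then apply the two facts: $p(D'|-\lpvec,\nnumvec)=p(D|\lpvec,\nnumvec)$ from the identity above, and $f_{\Lpvec}(-\lpvec|I)=f_{\Lpvec}(\lpvec|I)$, which is exactly the property guaranteed by desideratum~\ref{d:reflection}. The integrand then reduces to $p(D|\lpvec,\nnumvec)\,f_{\Lpvec}(\lpvec|I)$, and the integral is $p(D|\nnumvec,I)$.

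The computation is routine, and the only point requiring care is the bookkeeping in the central identity: tracking simultaneously the sign flip $\lpvec\to-\lpvec$ (which swaps $\ppair_{ij}\leftrightarrow 1-\ppair_{ij}$) and the win/loss swap $\wwin_{ij}\to\nnum_{ij}-\wwin_{ij}$ in the likelihood, and confirming that they cancel in every factor rather than merely in aggregate. Everything else is the same variable-substitution step already used in the proof of Lemma~\ref{l:predinterchange}.
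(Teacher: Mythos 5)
Your proposal is correct and follows exactly the route the paper takes: note that $\nnum'_{ij}=\nnum_{ij}$, then repeat the argument of Lemma~\ref{l:predinterchange} with the reflection $\lpvec\rightarrow-\lpvec$ in place of the team-interchange map, using the desideratum's consequence $f_{\Lpvec}(-\lpvec|I)=f_{\Lpvec}(\lpvec|I)$ and the unit-Jacobian change of variables. The paper's proof simply defers to the earlier lemma for these details, which you have spelled out correctly, including the factor-by-factor cancellation in the likelihood.
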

\begin{proof}
  Since $\wwin'_{ij}=\nnum_{ij}-\wwin_{ij}=\wwin_{ji}$, $\nnum'_{ij} =
  \wwin'_{ij}+\wwin'_{ji} = \wwin_{ji}+\wwin_{ij} = \nnum_{ij}$. The
  rest of the proof proceeds as with lemma~\ref{l:predinterchange},
  but with the appropriate definitions of $D\rightarrow D'$ and
  $\ptvec\rightarrow\ptvec'$.
\end{proof}
\begin{lem}
  For any $\nnumvec$, desideratum~\ref{d:proper} implies
  $p(D|\nnumvec,I)>0$ if $D$ is a set of results consistent with
  $\nnumvec$.
\end{lem}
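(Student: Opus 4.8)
The plan is to exploit the fact that the sampling distribution $p(D|\lpvec,\nnumvec)$ is strictly positive for every finite $\lpvec$, so that integrating it against a proper---hence positive-measure---prior cannot vanish.

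First I would start from the $\lpvec$ form of the prior predictive distribution in \eqref{e:priorpred} and rewrite the likelihood \eqref{e:likelihood} in terms of the log-odds using $\ppair_{ij}=(1+e^{-\lpair_{ij}})^{-1}$ and $1-\ppair_{ij}=(1+e^{\lpair_{ij}})^{-1}$, giving
\begin{equation}
  p(D|\lpvec,\nnumvec)
  = \prod_{i=1}^{\nt}\prod_{j=i+1}^{\nt}
  (1+e^{-\lpair_{ij}})^{-\wwin_{ij}}\,(1+e^{\lpair_{ij}})^{-(\nnum_{ij}-\wwin_{ij})}.
\end{equation}

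Next I would note that consistency of $D$ with $\nnumvec$ means $0\le\wwin_{ij}\le\nnum_{ij}$, so each exponent $\wwin_{ij}$ and $\nnum_{ij}-\wwin_{ij}$ is a nonnegative integer, and that every $\lpair_{ij}$ is a finite linear combination of the (finite) components of $\lpvec$ via \eqref{e:lptbasis}. Consequently each factor $(1+e^{\mp\lpair_{ij}})^{-1}\in(0,1)$, the whole product lies in $(0,1]$, and hence $p(D|\lpvec,\nnumvec)>0$ for every finite $\lpvec$.

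Finally I would invoke desideratum~\ref{d:proper}: since $f_{\Lpvec}(\lpvec|I)$ integrates to $1$, the set on which $f_{\Lpvec}(\lpvec|I)>0$ has positive Lebesgue measure. The integrand $p(D|\lpvec,\nnumvec)\,f_{\Lpvec}(\lpvec|I)$ is nonnegative everywhere and strictly positive on that set, so its integral is strictly positive, yielding $p(D|\nnumvec,I)>0$. I do not expect a genuine obstacle here; the argument is simply positivity of an integral. The only steps needing care are verifying that consistency of $D$ forces the exponents to be nonnegative---so that no factor is singular or inverted---and formalizing that a density with unit integral cannot be supported on a set of measure zero, which is immediate since a nonnegative function vanishing almost everywhere would integrate to $0$ rather than $1$.
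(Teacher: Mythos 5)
Your proposal is correct and follows essentially the same route as the paper: strict positivity of the likelihood $p(D|\lpvec,\nnumvec)$ for every finite $\lpvec$ (using that consistency of $D$ with $\nnumvec$ makes all exponents nonnegative), combined with normalizability of $f_{\Lpvec}(\lpvec|I)$. If anything, your final step---that a nonnegative density integrating to $1$ must be positive on a set of positive Lebesgue measure, so integrating the strictly positive likelihood against it gives a strictly positive result---is spelled out more completely than in the paper, whose own displayed conclusion stops at $p(D|\nnumvec,I)\ge 0$ rather than the claimed strict inequality.
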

\begin{proof}
  Since $p(D|\ppvec,I)>0$ for all $\ppvec$ with $0<\ppair_{ij}<1$, and
  $p(D|\lpvec,I)=p(D|\ppvec,I)$, we have $p(D|\lpvec,I)>0$ for all
  $\lpvec$ with $-\infty<\lpair_{ij}<\infty$.  Since
  $f_{\Lpvec}(\lpvec|I)\ge 0$ and
  $\int_{-\infty}^{\infty}\cdots\int_{-\infty}^{\infty}
  d^{\nt-1}\lpair\,f_{\Lpvec}(\lpvec|I)=1$, we must have
  \begin{equation}
    p(D|\nnumvec,I)
    = \int_{-\infty}^{\infty}\cdots \int_{-\infty}^{\infty}
    d^{t-1}\lpair\, p(D|\lpvec,\nnumvec)\,f_{\Lpvec}(\lpvec|I)
    \ge 0
  \end{equation}
\end{proof}
\begin{lem}\label{l:predelim}
  Given $\nt$ teams and a $\nnumvec$ with $\nnum_{i\nt}=0$,
  desideratum~\ref{d:elim} implies
  $p(D|\nnumvec,I_{\nt})=p(D|\nnumvec,I_{t-1})$ if $D$ is a set of results
  consistent with $\nnumvec$.
\end{lem}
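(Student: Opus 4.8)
The plan is to start from the prior predictive integral \eqref{e:priorpred} for the $\nt$-team model and exploit the fact that team $\nt$ plays no games. Since $\nnum_{i\nt}=0$ for every $i<\nt$ and $D$ is consistent with $\nnumvec$, we have $\wwin_{i\nt}=\wwin_{\nt i}=0$, so every factor in the likelihood \eqref{e:likelihood} involving $\ppair_{i\nt}$ collapses to $\ppair_{i\nt}^{0}(1-\ppair_{i\nt})^{0}=1$. Hence the likelihood carries no dependence on $\pteam_{\nt}$ and coincides with the $(\nt-1)$-team likelihood, $p(D|\ptvec,\nnumvec)=p(D|\ptvec',\nnumvec')$, where $\ptvec'=(\pteam_1,\ldots,\pteam_{\nt-1})$ and $\nnumvec'$ is the schedule restricted to the first $\nt-1$ teams.

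Next I would factor the $\nt$-dimensional integral. Because the integrand's likelihood depends only on $\ptvec'$, Tonelli's theorem lets me carry out the $\pteam_{\nt}$ integration first:
\begin{equation}
  p(D|\nnumvec,I_{\nt})
  = \int_{0}^{\infty}\cdots\int_{0}^{\infty} d^{\nt-1}\pteam'\,
  p(D|\ptvec',\nnumvec')
  \left[\int_{0}^{\infty} d\pteam_{\nt}\, f_{\Ptvec}(\ptvec|I_{\nt})\right].
\end{equation}
The bracketed quantity is precisely the marginalized prior appearing in desideratum~\ref{d:elim}; calling the corresponding $(\nt-1)$-team prior $\tilde{I}$, so that $f_{\Ptvec'}(\ptvec'|\tilde{I})=\int_{0}^{\infty} d\pteam_{\nt}\,f_{\Ptvec}(\ptvec|I_{\nt})$, the displayed integral is nothing but the prior predictive distribution of the reduced problem under $\tilde{I}$, i.e., $p(D|\nnumvec,I_{\nt})=p(D|\nnumvec',\tilde{I})$.

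Finally I would invoke the two available results. Desideratum~\ref{d:elim} asserts $f_{\Ptvec'}(\ptvec'|\tilde{I})\cong f_{\Ptvec'}(\ptvec'|I_{\nt-1})$, i.e. $\tilde{I}\cong I_{\nt-1}$, and Lemma~\ref{l:pred} guarantees that equivalent priors produce identical prior predictive distributions, so $p(D|\nnumvec',\tilde{I})=p(D|\nnumvec',I_{\nt-1})$. Since the entries of $\nnumvec$ absent from $\nnumvec'$ are all zero and contribute only trivial factors, $p(D|\nnumvec',I_{\nt-1})=p(D|\nnumvec,I_{\nt-1})$, and chaining these equalities yields the claim. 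The one step requiring care, and the crux of the argument, is that desideratum~\ref{d:elim} supplies only \emph{equivalence} $\cong$ of the marginalized prior with $I_{\nt-1}$, not equality; one cannot substitute it directly under the integral sign, and it is exactly Lemma~\ref{l:pred} (equivalence implying equal prior predictive) that licenses passing from $\tilde{I}$ to $I_{\nt-1}$ at the level of the predictive distribution.
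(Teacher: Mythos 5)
Your proposal is correct and follows essentially the same route as the paper's proof: observe that the likelihood is independent of $\pteam_{\nt}$ when $\nnum_{i\nt}=0$, factor the prior predictive integral so that the inner $\pteam_{\nt}$ integration produces the marginalized prior of desideratum~\ref{d:elim}, and then use Lemma~\ref{l:pred} to pass from equivalence ($\cong$) of priors to equality of prior predictive distributions. Your extra bookkeeping (Tonelli, the explicit $\nnumvec$ versus $\nnumvec'$ distinction) only makes explicit what the paper leaves implicit.
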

\begin{proof}
  If $\nnum_{i\nt}=0$, $\pteam_{\nt}$ is irrelevant to the sampling
  distribution, and $p(D|\ptvec,\nnumvec)=p(D|\ptvec',\nnumvec)$ where
  $\ptvec'$ is the $(\nt-1)$-element vector with $\pteam'_i=\pteam_i$
  for $i=0,\ldots,\nt-1$, as in the statement of
  desideratum~\ref{d:elim}.  Thus
  \begin{equation}
    \begin{split}
      p(D|\nnumvec,I_{\nt}) &= \int_{0}^{\infty}\cdots \int_{0}^{\infty}
      d^{\nt}\pteam\,
      p(D|\ptvec,\nnumvec)\,f_{\Ptvec}(\ptvec|I_{\nt})
      \\
      &= \int_{0}^{\infty}\cdots \int_{0}^{\infty}
      d^{\nt-1}\pteam'\,
      p(D|\ptvec',\nnumvec)
      \int_{0}^{\infty} d\pteam_{\nt}\,f_{\Ptvec}(\ptvec|I_{\nt})
      \\
      &= \int_{0}^{\infty}\cdots \int_{0}^{\infty}
      d^{\nt-1}\pteam'\,
      p(D|\ptvec',\nnumvec) f_{\Ptvec'}(\ptvec'|I_{\nt})
    \end{split}
  \end{equation}
  Desideratum~\ref{d:elim} says that
  $f_{\Ptvec'}(\ptvec'|I_{\nt})\cong f_{\Ptvec'}(\ptvec'|I_{\nt-1})$,
  and lemma~\ref{l:pred} states that this implies
  $p(D|\nnumvec,I_{\nt})=p(D|\nnumvec,I_{\nt-1})$.
\end{proof}

\section{Choice of Prior Distribution}

\subsection{General Considerations for Special Cases}

\subsubsection{Two Teams}

\label{s:2teams}

When $\nt=2$, there is only one independent probability
$\ppair_{12}=\frac{\pteam_1}{\pteam_1+\pteam_2}$, so any distribution
$f_{\Ptvec}(\pteam_1,\pteam_2)$ reduces to a function
$f_{\Ppair_{12}}(\ppair_{12})$ via marginalization
\begin{equation}
  f_{\Lpair_{12}}(\lpair_{12})
  = \int_{-\infty}^{\infty} d\lteam_2\,f_{\Ltvec}(\lpair_{12}+\lteam_2,\lteam_2)
\end{equation}
where the transformation \eqref{e:fltpt} means
\begin{equation}
  f_{\Ltvec}(\lteam_1,\lteam_2)
  = e^{\lteam_1+\lteam_2} f_{\Ptvec}(e^{\lteam_1},e^{\lteam_2})
\end{equation}
and \eqref{e:fpplp} means
\begin{equation}
  f_{\Ppair_{12}}(\ppair_{12})
  = \ppair_{12}^{-1}(1-\ppair_{12})^{-1}\,f_{\Lpair_{12}}(-\ln[\ppair_{12}^{-1}-1])
\end{equation}
For the case of two teams, desiderata \ref{d:interchange} and
\ref{d:reflection} are equivalent, as both transformations reduce to
$\ppair_{12}\rightarrow 1-\ppair_{12}$, or equivalently
$\lpair_{12}\rightarrow -\lpair_{12}$.  They will be satisfied if and
only if $f_{\Lpair_{12}}(\lpair_{12})$ is an even function, or
equivalently if
$f_{\Ppair_{12}}(\ppair_{12})=f_{\Ppair_{12}}(1-\ppair_{12})$.
Desideratum~\ref{d:proper} will be satisfied if and only if
\begin{equation}
  \int_0^1 d\ppair_{12}\,f_{\Ppair_{12}}(\ppair_{12}) < \infty
\end{equation}
or equivalently
\begin{equation}
  \int_{-\infty}^{\infty} d\lpair_{12}\,f_{\Lpair_{12}}(\lpair_{12}) < \infty
\end{equation}

Suppose $f_{\Ppair_{12}}(\ppair_{12})$ belongs to the family of beta
distributions (which is the conjugate prior family for the likelihood
\eqref{e:likelihood}),
\begin{equation}
  \label{e:betadist}
  f_{\Ppair_{12}}(\ppair_{12})
  \propto \ppair_{12}^{\alpha-1}(1-\ppair_{12})^{\beta-1}
\end{equation}
then $f_{\Lpair_{12}}(\lpair_{12})$ is a generalized logistic
distribution of Type IV \citep{Prentice:1976,Nassar:2012}
\begin{equation}
  \label{e:betalog}
  f_{\Lpair_{12}}(\lpair_{12}) \propto
  (1+e^{-\lpair_{12}})^{-\alpha}(1+e^{\lpair_{12}})^{-\beta}
\end{equation}
Included in this family are
\begin{enumerate}
\item The Haldane prior \citep{Haldane1932,Jeffreys1939}
  $\alpha=\beta=0$, which is uniform $\lpair_{12}$.  This improper
  prior corresponds to ``total ignorance''.
\item The Jeffreys prior \citep{Jeffreys1939,Jeffreys:1946}
  $\alpha=\beta=\frac{1}{2}$.
\item The Bayes-Laplace prior $\alpha=\beta=1$, which is uniform in
  $\ppair_{12}$.  This is also the maximum entropy prior, if we assume
  a measure uniform in $\ppair_{12}$.
\end{enumerate}
For the beta family, desiderata \ref{d:interchange} and
\ref{d:reflection} will be satisfied if $\alpha=\beta$.  Desideratum
\ref{d:proper} will be satisfied if $\alpha,\beta>0$.

With $\nt=2$, the prior predictive probability for a set of results
which include $\wwin_{12}$ wins for team $1$ and
$\wwin_{21}=\nnum_{12}-\wwin_{12}$ wins for team $2$ will be
\begin{equation}
  p(D|\nnum_{12}) = \int_{0}^{1} d\ppair_{12}\,\ppair_{12}^{\wwin_{12}}
  (1-\ppair_{12})^{\wwin_{21}}\,f_{\Ppair_{12}}(\ppair_{12})
\end{equation}
If $f_{\Ppair_{12}}(\ppair_{12})$ is in the Beta family
\eqref{e:betadist}, it will be
\begin{equation}
  \label{e:predbeta}
  p(D|\nnum_{12},I_{\alpha,\beta})
  = \frac{\Gamma(\alpha+\beta)}{\Gamma(\alpha)\Gamma(\beta)}
  \frac{\Gamma(\alpha+\wwin_{12})\Gamma(\beta+\wwin_{21})}
  {\Gamma(\alpha+\beta+\nnum_{12})}
\end{equation}
In particular, for the Bayes-Laplace prior,
\begin{equation}
  p(D|\nnum_{12},I_{1,1})
  = \frac{\Gamma(1+\wwin_{12})\Gamma(1+\wwin_{21})}
  {\Gamma(2+\nnum_{12})}
  = \left((\nnum_{12}+1)\binom{\nnum_{12}}{\wwin_{12}}\right)^{-1}
\end{equation}
For the Jeffreys prior,
\begin{equation}
  \label{e:predjeff}
  p(D|\nnum_{12},I_{1/2,1/2})
  = \frac{\Gamma(\frac{1}{2}+\wwin_{12})\Gamma(\frac{1}{2}+\wwin_{21})}
  {\pi\Gamma(1+\nnum_{12})}
  = \frac{(2\wwin_{12}-1)!!(2\wwin_{21}-1)!!}{2^{\nnum_{12}}\nnum_{12}!}
\end{equation}
Viewing the Haldane prior as a limiting case,
$p(D|\nnum_{12},I_{1/2,1/2})=0$ unless $\wwin_{12}=\nnum_{12}$ or
$\wwin_{21}=\nnum_{12}$.  The prior predictive probabilities for
$\wwin_{12}=\nnum_{12}$ and $\wwin_{21}=\nnum_{12}$ depend on the
order in which the limits $\alpha\rightarrow 0$ and
$\beta\rightarrow 0$ are taken.

\subsubsection{Three Teams}

In the case $\nt=3$, the $\{\ppair_{ij}\}$ are related by
$\ppair_{ji}=1-\ppair_{ij}$ as well as
\begin{equation}
  \label{e:pconstraint}
  \ppair_{13}^{-1} - 1 = (\ppair_{12}^{-1} - 1)(\ppair_{23}^{-1} - 1)
\end{equation}
Although each $\ppair_{ij}$ is confined to the finite range $[0,1]$,
the surface defined by \eqref{e:pconstraint} is curved, which makes it
difficult to display the two-dimensional probability distribution
$f_{\Ppvec}(\ppvec)$ while preserving its intuitive interpretation.
On the other hand, in terms of the $\{\lpair_{ij}\}$ the constraints
are $\lpair_{ji}=-\lpair_{ij}$ and
\begin{equation}
  \lpair_{13} = \lpair_{12} + \lpair_{23}
\end{equation}
An especially convenient set of {\coord}s for displaying probability
distributions $f_{\Lpvec}(\lpvec)$ is
\begin{equation}
  \label{e:xycoords}
  x = \frac{1}{\sqrt{3}}(\lpair_{12}+\lpair_{13}),\qquad
  y = \lpair_{23}\ ,
\end{equation}
which can be inverted to give
\begin{equation}
  \label{e:gammaxy}
  \lpair_{12} = \frac{\sqrt{3}}{2}\,x - \frac{1}{2}\,y
  ,\qquad
  \lpair_{23} = y
  ,\qquad
  \lpair_{13} = \frac{\sqrt{3}}{2}\,x + \frac{1}{2}\,y
\end{equation}
These two-dimensional {\coord}s on the space of log-odds-ratios
$\lpvec$ are also two of the three {\coord}s on the space of strengths
$\ltvec$, according to
\begin{equation}
  \label{e:xylteam}
  x = \frac{1}{\sqrt{3}} (2\lteam_1 - \lteam_2 - \lteam_3)
  ,\qquad
  y = \lteam_2 - \lteam_3
  ,\qquad
  z = \sqrt{\frac{2}{3}}\, ( \lteam_1 + \lteam_2 + \lteam_3 )
\end{equation}
In these {\coord}s, to go from a distribution $f_{\Ltvec}(\ltvec)$ to
$f_{\Lpvec}(\lpvec)$ one simply converts $f_{\Ltvec}(\ltvec)$ into
$f_{XYZ}(x,y,z)$ (which involves a constant Jacobian determinant) and
then marginalizes over $z$.  An example of such a plot is shown in
\fref{f:t3ment}.
\begin{figure}[t!]
  \centering
  \includegraphics[width=0.8\columnwidth]{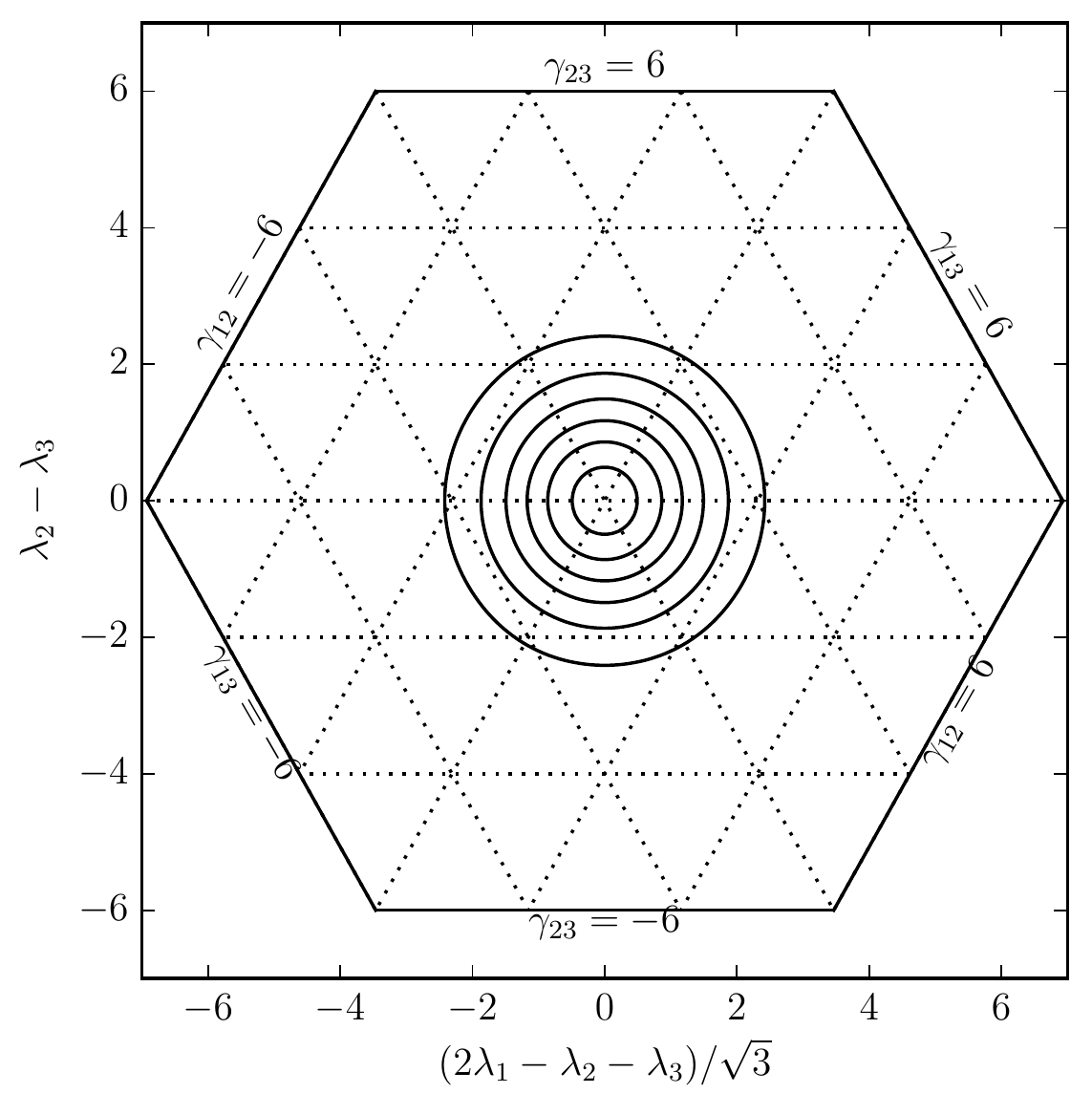}
  \caption{Contour plot of the prior probability distribution
    $f_{\Lpvec}(\lpvec|I_{\text{S}3})$ arising from the Maximum
    entropy prescription of \sref{s:maxent} when $\nt=3$.  The {\coord}s
    for the plot are the $x$ and $y$ defined in \eqref{e:xycoords},
    which determine all of the $\{\lpair_{ij}\}$ and thus the
    predicted probabilities.  The orthogonal direction
    $z=\sqrt{\frac{2}{3}}\, (\lteam_1 + \lteam_2 + \lteam_3)$ defined
    in \eqref{e:xylteam} is irrelevant to the predictions of the
    model.}
  \label{f:t3ment}
\end{figure}

\subsection{Evaluation of Prior Distributions}

We now consider several families of prior distributions which have
been proposed, and evaluate them according to the desiderata in
\sref{s:motivation}.

\subsubsection{Haldane Prior}

Perhaps the simplest prior that can be chosen is the improper prior
\begin{equation}
  f_{\Ltvec}(\ltvec|I_0)=\text{constant}\ ,
\end{equation}
uniform in all of the log-strengths, which is the generalization of
the Haldane prior considered in \sref{s:2teams}.  Then the
marginalized prior
probability distribution for the log-odds-ratios is
$f_{\Lpvec}(\lpvec|I_0)=\text{constant}$.  This prior obviously
satisfies desiderata \ref{d:interchange} and \ref{d:reflection}, as
well as desideratum~\ref{d:elim}.  Of course, since the prior
$f_{\Lpvec}(\lpvec|I_0)$ is improper, it violates desideratum
\ref{d:proper}.  The mode of the posterior
$f_{\Ltvec|D}(\ltvec|D,I_0)$ will be the maximum likelihood solution,
and the posterior will be normalizable under the conditions given by
\citet{Ford:1957} for the existence of the maximum likelihood
solution.

Note that while the improper prior
$f_{\Ptvec}(\ptvec|I_0')=\text{constant}$ produces a different prior
on the individual strengths, since
\begin{equation}
  f_{\Ltvec}(\ltvec|I_0') \propto \exp\left(\sum_{i=1}^{\nt} \lteam_i\right)
\end{equation}
the two are equivalent, $f_{\Ltvec}(\ltvec|I_0')\cong
f_{\Ltvec}(\ltvec|I_0)$, because $f_{\Ltvec}(\ltvec|I_0')$ depends
only on the sum of the log-strengths.

\subsubsection{Maximum Entropy}

\label{s:maxent}

The next prior of interest is one which maximizes the Shannon entropy.
For the case $\nt=2$, we defined the entropy as
\begin{equation}
  S_2 = - \int_0^1 d\ppair_{12}\,
  f_{\Ppair_{12}}(\ppair_{12}) \ln f_{\Ppair_{12}}(\ppair_{12})
\end{equation}
which made the maximum entropy prior
$f_{\Ppair_{12}}(\ppair_{12}|I_{\text{S}2})=\text{const}$.  In order for the
entropy of a continuous distribution $f(x)$ to be invariant, it needs
to be defined with a measure $\mu(x)$ which transforms as a density
under reparametrization.  Thus we can write
\begin{equation}
  S_2
  = - \int_0^1 d\ppair_{12}\,
  f_{\Ppair_{12}}(\ppair_{12})
  \ln \frac{f_{\Ppair_{12}}(\ppair_{12})}{\mu_{\Ppair_{12}}(\ppair_{12})}
  = - \int_0^1 d\lpair_{12}\,
  f_{\Lpair_{12}}(\lpair_{12})
  \ln \frac{f_{\Lpair_{12}}(\lpair_{12})}{\mu_{\Lpair_{12}}(\lpair_{12})}
\end{equation}
where we have assumed that
$\mu_{\Ppair_{12}}(\ppair_{12})=\text{constant}$ and thus, using
\eqref{e:flppp} to transform the measure,
\begin{equation}
  \mu_{\Lpair_{12}}(\lpair_{12}) \propto (1+e^{-\lpair_{12}})^{-1}(1+e^{\lpair_{12}})^{-1}
\end{equation}
If we maximize the entropy of a continuous distribution with
normalization as the only constraint, the probability density is
proportional to the measure, so
\begin{equation}
  f_{\Lpair_{12}}(\lpair_{12}|I_{\text{S}2})
  \propto (1+e^{-\lpair_{12}})^{-1}(1+e^{\lpair_{12}})^{-1}
\end{equation}
which is indeed of the form \eqref{e:betalog} with $\alpha=\beta=1$.

For general $\nt$, we could define by analogy a measure uniform in the
$\{\ppair_{ij}\}$, $\mu_{\Ppvec}(\ppvec)=\text{constant}$, and then
minimize the entropy
\begin{equation}
  S_{\nt}
  = - \int_0^1\cdots\int_0^1 d^{\nt(\nt-1)/2}\ppair\,
  f_{\Ppvec}(\ppvec)\ln f_{\Ppvec}(\ppvec)
\end{equation}
subject to the constraints that the probability density vanishes
unless the arguments satisfy
\begin{equation}
  \ppair^{-1}_{ij} = (\ppair^{-1}_{ik}-1)(\ppair^{-1}_{kj}-1)\qquad
  i=1,\ldots,\nt;\ j=i+1,\ldots,\nt; k=i+1,\ldots,j-1
\end{equation}
It is equivalent, and more straightforward, to confine the
distribution to the constraint surface by writing it in terms of the
$\nt-1$ unique $\lpair_\alpha$ parameters:
\begin{equation}
  S_{\nt}'
  = - \int_{-\infty}^{\infty}\cdots\int_{-\infty}^{\infty} d^{\nt-1}\lpair\,
  f_{\Lpvec}(\lpvec)\ln \frac{f_{\Lpvec}(\lpvec)}{\mu_{\Lpvec}(\lpvec)}
\end{equation}
where the measure is
\begin{equation}
  \label{e:meast}
  \mu_{\Lpvec}(\lpvec)
  \propto \prod_{i=1}^{\nt}\prod_{j=i+1}^{\nt}
  (1+e^{-\lpair_{ij}})^{-1}(1+e^{\lpair_{ij}})^{-1}
\end{equation}
As before, the maximum entropy distribution is
$f_{\Lpvec}(\lpvec|I_{\text{S}\nt})\propto\mu_{\Lpvec}(\lpvec)$, or
\begin{equation}
  \label{e:priorme}
  f_{\Lpvec}(\lpvec|I_{\text{S}\nt})
  \propto
  \prod_{i=1}^{\nt}\prod_{j=i+1}^{\nt}
  \left[
    1 + \exp \left(
      -\sum_{\alpha=1}^{\nt-1} C_{ij,\alpha} \lpair_\alpha
    \right)
  \right]^{-1}
  \left[
    1 + \exp \left(
      \sum_{\beta=1}^{\nt-1} C_{ij,\beta} \lpair_\beta
    \right)
  \right]^{-1}
\end{equation}
We can see from the form of \eqref{e:meast} that desiderata
\ref{d:interchange} and \ref{d:reflection} are satisfied.  It is also
easy to see that $f_{\Lpvec}(\lpvec|I_{\text{S}\nt})$ is exponentially
suppressed as any linear combination of the $\{\lpair_\alpha\}$ goes
to infinity, and therefore desideratum~\ref{d:proper}.  For example,
as $\lpair_\alpha\rightarrow\infty$,
\begin{equation}
  f_{\Lpvec}(\lpvec|I_{\text{S}\nt})
  \rightarrow \prod_{i=1}^{\nt}\prod_{j=i+1}^{\nt} e^{-\abs{C_{ij,\alpha}}\lpair_\alpha}
  = \exp \left(
    -\lpair_\alpha\sum_{i=1}^{\nt}\sum_{j=i+1}^{\nt} \abs{C_{ij,\alpha}}
  \right)
\end{equation}
However, we can see that desideratum~\ref{d:elim} is \emph{not}
satisfied by considering the case $\nt=3$ and showing that the marginal
distribution
\begin{equation}
  f_{\Lpair_{23}}(\lpair_{23}|I_{\text{S}3})\ne
  f_{\Lpair_{23}}(\lpair_{23}|I_{\text{S}2})
\end{equation}
Explicitly. using the {\coord}s \eqref{e:xycoords}, in which
$y=\lpair_{23}$,
\begin{equation}
  \begin{split}
    f_{\Lpvec}(x,y|I_{\text{S}3})
    &\propto
    (1+e^{\frac{\sqrt{3}}{2}x}e^{-y/2})^{-1}
    (1+e^{-\frac{\sqrt{3}}{2}x}e^{y/2})^{-1}
    (1+e^{-y})^{-1}(1+e^{y})^{-1}
    \\
    &\phantom{\propto}\times
    (1+e^{\frac{\sqrt{3}}{2}x}e^{y/2})^{-1}
    (1+e^{-\frac{\sqrt{3}}{2}x}e^{-y/2})^{-1}
    \ ,
  \end{split}
\end{equation}
which is plotted in \fref{f:t3ment}.
The marginalization integral can be done by partial fractions to give
\begin{equation}
  f_{\Lpair_{23}}(\lpair_{23}|I_{\text{S}3})
  = \int_{-\infty}^{\infty}f_{\Lpvec}(x,\lpair_{23}|I_{\text{S}3})\,dx
  \propto
  \frac{e^{\lpair_{23}}[2(1-e^{\lpair_{23}})+\lpair_{23}(1+e^{\lpair_{23}})]}
  {(e^{\lpair_{23}}-1)^3(1+e^{-\lpair_{23}})(1+e^{\lpair_{23}})}
\end{equation}
which is manifestly different from
\begin{equation}
  f_{\Lpair_{23}}(\lpair_{23}|I_{\text{S}2})
  \propto
  \frac{1}{(1+e^{-\lpair_{23}})(1+e^{\lpair_{23}})}
\end{equation}
by more than just a normalization constant.

\subsubsection{Jeffreys Prior}

\label{s:jeff}

The Jeffreys prior construction \citep{Jeffreys:1946} can be carried
out using the likelihood \eqref{e:likelihood}, to produce a prior
\begin{equation}
  f_{\Ltvec}(\ltvec|I_{\text{J}}) \propto \sqrt{\mc{I}(\ltvec)}
\end{equation}
where $\mc{I}(\ltvec)$ is the Fisher information associated with the
likelihood.  Since the likelihood is written in terms of the
$\{\ppair_{ij}\}$, or equivalently in terms of the $\{\lpair_{ij}\}$,
it is simpler to generate
\begin{equation}
  f_{\Lpvec}(\lpvec|I_{\text{J}}) \propto \sqrt{\mc{I}(\lpvec)}
\end{equation}
directly.  If we write the $\nt-1$ independent elements of $\lpvec$ as
\begin{equation}
  \tag{\ref{e:lptbasis}}
  \lpair_{ij} = \sum_{\alpha=1}^{\nt-1} C_{ij,\alpha} \lpair_{\alpha}
\end{equation}
we can write
\begin{equation}
  \mc{I}(\lpvec)
  =
  -E\left[
    \det\left\{
      \frac{\partial^2\ell(\lpvec;D)}
      {\partial\lpair_{\alpha}\partial\lpair_{\beta}}
    \right\}
  \right]
\end{equation}
where
\begin{equation}
  \ell(\lpvec;D) = \ln p_{D|\lpvec}(D|\lpvec)
  = \sum_{i=1}^{\nt}\sum_{j=i+1}^{\nt}
  (\wwin_{ij}\lpair_{ij}-\nnum_{ij}\ln[1+e^{\lpair_{ij}}])
\end{equation}
The linear form of \eqref{e:lptbasis} allows us to determine the
Fisher information matrix for the $\{\lpair_\alpha\}$ from that for
the $\{\lpair_{ij}\}$ as
\begin{equation}
  \frac{\partial^2\ell(\lpvec;D)}
  {\partial\lpair_{\alpha}\partial\lpair_{\beta}}
  =
  \sum_{i=1}^{\nt}\sum_{j=i+1}^{\nt}
  \sum_{i'=1}^{\nt}\sum_{j'=i'+1}^{\nt}
  C_{ij,\alpha}
  C_{i'j',\beta}
  \frac{\partial^2\ell(\lpvec;D)}
  {\partial\lpair_{ij}\partial\lpair_{i'j'}}
\end{equation}
Since the log-likelihood is relatively simple written in terms of the
$\{\lpair_{ij}\}$, we can write
\begin{equation}
  \frac{\partial\ell(\lpvec;D)}
  {\partial\lpair_{ij}}
  = \wwin_{ij} - \nnum_{ij}e^{\lpair_{ij}}(1+e^{\lpair_{ij}})^{-1}
  = \wwin_{ij} - \nnum_{ij}(1+e^{-\lpair_{ij}})^{-1}
\end{equation}
and
\begin{equation}
  \frac{\partial^2\ell(\lpvec;D)}
  {\partial\lpair_{ij}\partial\lpair_{i'j'}}
  = - \delta_{ii'}\delta_{jj'}
  \nnum_{ij}e^{-\lpair_{ij}}(1+e^{-\lpair_{ij}})^{-2}
  = - \delta_{ii'}\delta_{jj'}
  \nnum_{ij}(1+e^{-\lpair_{ij}})^{-1}(1+e^{\lpair_{ij}})^{-1}
\end{equation}
so
\begin{equation}
  \label{e:fisheralpha}
  - \frac{\partial^2\ell(\lpvec;D)}
  {\partial\lpair_{\alpha}\partial\lpair_{\beta}}
  =
  \sum_{i=1}^{\nt}\sum_{j=i+1}^{\nt}
  C_{ij,\alpha}
  C_{ij,\beta}
  \nnum_{ij}(1+e^{-\lpair_{ij}})^{-1}(1+e^{\lpair_{ij}})^{-1}
\end{equation}
We can see by inspection of \eqref{e:fisheralpha} that the Jeffreys
prior always satisfies desideratum~\ref{d:reflection}.  We can verify
that in the case $\nt=2$, for which there is only one independent
$\lpair_{\alpha}$, the Jeffreys prior becomes
\begin{equation}
  f_{\Lpvec}(\lpair_{12}|I_{\text{J}2})
  \propto
  (1+e^{-\lpair_{ij}})^{-1/2}(1+e^{\lpair_{ij}})^{-1/2}
\end{equation}
which is of the form \eqref{e:betalog} with $\alpha=\beta=\frac{1}{2}$
as before.

If we write
\begin{equation}
  - \frac{\partial^2\ell(\lpvec;D)}
  {\partial\lpair_{\alpha}\partial\lpair_{\beta}}
  =
  \sum_{i=1}^{\nt}\sum_{j=i+1}^{\nt}
  C_{ij,\alpha}
  C_{ij,\beta}
  \nnum_{ij}(e^{\lpair_{ij}/2}+e^{-\lpair_{ij}/2})^{-2}
\end{equation}

Note that if we make the specific choice
$\lpair_{\alpha}=\lpair_{\alpha,\alpha+1}$, we have
\begin{equation}
  \label{e:gammachoice}
  \lpair_{ij} = \sum_{\alpha=i}^{j-1} \lpair_\alpha
\end{equation}
which makes
\begin{equation}
  \label{e:Cijdiff}
  C_{ij,\alpha} =
  \begin{cases}
    1 & i \le \alpha \le j-1\\
    0 & \text{otherwise}
  \end{cases}
\end{equation}
and then the Fisher information matrix \eqref{e:fisheralpha} is
\begin{equation}
  - \frac{\partial^2\ell(\lpvec;D)}
  {\partial\lpair_{\alpha}\partial\lpair_{\beta}}
  =
  \sum_{\substack{i,j\\ i \le \alpha \le j-1 \\ i \le \beta \le j-1}}
  \nnum_{ij}(e^{\lpair_{ij}/2}+e^{-\lpair_{ij}/2})^{-2}
\end{equation}

For $\nt>2$, the Fisher information matrix \eqref{e:fisheralpha} depends
on the number of games $\nnum_{ij}$ to be played between each pair of
teams.  However, in order to satisfy desideratum~\ref{d:interchange},
we need to have the same $\nnum_{ij}$ for each pair of teams, in which
case this $\nnum_{ij}$ becomes a constant which can be absorbed into the
normalization, and the prescription becomes unique.

By explicitly examining $\nt=3$, we will show that the Jeffreys prior
with all $\{\nnum_{ij}\}$ equal fails to satisfy desideratum
\ref{d:elim}.\footnote{It can be seen to satisfy \ref{d:proper} by
  noting that as a linear combination of the $\{\lpair_\alpha\}$ goes
  to infinity, at most one of the $\{m_{ij}\}$ can remain finite; the
  other two will be exponentially suppressed, and thus each term in
  the square root in \eqref{e:priorjeff3} will go to zero exponentially.}
In the case $\nt=3$, the vector of independent
log-odds-ratios $\lpvec$ is two-dimensional, and the elements of the
Fisher information matrix are
\begin{subequations}
  \begin{gather}
    - \frac{\partial^2\ell(\lpvec;D)}
    {\partial\lpair_{1}\partial\lpair_{1}}
    =
    M_{12} + M_{13}
    \\
    - \frac{\partial^2\ell(\lpvec;D)}
    {\partial\lpair_{1}\partial\lpair_{2}}
    =
    M_{13}
    \\
    - \frac{\partial^2\ell(\lpvec;D)}
    {\partial\lpair_{2}\partial\lpair_{2}}
    =
    M_{13} + M_{23}
  \end{gather}
\end{subequations}
where
\begin{equation}
  M_{ij} := \nnum_{ij}(e^{\lpair_{ij}/2}+e^{-\lpair_{ij}/2})^{-2}
  =: \nnum_{ij} m_{ij}
\end{equation}
The Fisher information is the determinant of this matrix
\begin{equation}
  \mc{I}(\lpvec) = (M_{12}+M_{13})(M_{13}+M_{23})-M_{13}^2
  = M_{12}M_{13}+M_{12}M_{23}+M_{13}M_{23}
\end{equation}
so the Jeffreys prior is
\begin{equation}
  \label{e:priorjeff3}
  f_{\Lpvec}(\lpvec|I_{\text{J}3})
  \propto
  \sqrt{M_{12}M_{13}+M_{12}M_{23}+M_{13}M_{23}}
  \propto
  \sqrt{m_{12}m_{13}+m_{12}m_{23}+m_{13}m_{23}}
\end{equation}
We can show that the Jeffreys prior fails to satisfy
desideratum~\ref{d:elim} by using the posterior predictive
distribution and Lemma~\ref{l:predelim}.  Suppose $\nnum_{12}=2$ and
$\nnum_{i3}=0$.  Then \eqref{e:predjeff} implies
\begin{equation}
  p(D|\nnumvec,I_{\text{J}2}) =
  \begin{cases}
    0.125 & \wwin{12}=1 \\
    0.375 & \wwin{12}=0 \hbox{ or } 2
  \end{cases}
\end{equation}
We can evaluate $p(D|\nnumvec,I_{\text{J}3})$ numerically, and find
\begin{equation}
  p(D|\nnumvec,I_{\text{J}3}) \approx
  \begin{cases}
    0.108 & \wwin{12}=1 \\
    0.392 & \wwin{12}=0 \hbox{ or } 2
  \end{cases}
\end{equation}
showing explicitly that
$p(D|\nnumvec,I_{\text{J}3})\ne p(D|\nnumvec,I_{\text{J}2})$ and
therefore desideratum~\ref{d:elim} is violated.

\subsubsection{Dirichlet Distribution}
\label{s:dirichlet}

\citet{Chen19849} discuss Bayesian estimators for the Bradley-Terry
model starting with a Dirichlet distribution
\begin{equation}
  \label{e:dirichlet}
  f_{\Ptvec}(\ptvec|I_{\text{D}\nt})
  = \frac{\Gamma\left(\sum_{i=1}^{\nt} \alpha_i\right)}
  {\prod_{i=1}^{\nt} \Gamma(\alpha_i)}
  \left(\prod_{i=1}^{\nt} \pteam_i^{\alpha_i-1}\right)
  \delta\left(1-\sum_{i=1}^{\nt} \pteam_i\right)
\end{equation}
where $\delta(x)$ is the Dirac delta function.  In particular, they
note that the marginal distribution for any of the $\{\ppair_{ij}\}$
is a beta distribution with parameters $\alpha=\alpha_i$ and
$\beta=\alpha_j$:
\begin{equation}
  \label{e:dirichletmarg}
  f_{\Ppair_{ij}}(\ppair_{ij}|I_{\text{D}\nt})
  = \frac{\Gamma(\alpha_i+\alpha_j)}{\Gamma(\alpha_i)\Gamma(\alpha_j)}
  \ppair_{ij}^{\alpha_i-1}(1-\ppair_{ij})^{\alpha_j-1}
\end{equation}
The Dirichlet prior satisfies desideratum~\ref{d:proper} as long as
all of the $\{\alpha_i\}$ are positive; it also satisfies desideratum
\ref{d:interchange} if all of the parameters $\{\alpha_i\}$ are equal
to a the same value $\alpha$.  Although the delta function enforcing
the constraint $\sum_{i=1}^{\nt} \pteam_i^{\alpha_i}=1$ means that the
different $\{\pteam_i\}$ are not independently distributed under
$I_{\text{D}\nt}$, we can see that desideratum~\ref{d:elim} is
satisfied by defining a change of variables
\begin{equation}
  \pteam'_i = \frac{\pteam_i}{1-\pteam_{\nt}} \qquad i = 1,\ldots,\nt-1
\end{equation}
under which the probability density \eqref{e:dirichlet} becomes
\begin{equation}
  f_{\Ptvec',\Pteam_{\nt}}(\ptvec',\pteam_{\nt}|I_{\text{D}\nt})
  = \frac{\Gamma\left(\sum_{i=1}^{\nt} \alpha_i\right)}
  {\prod_{i=1}^{\nt} \Gamma(\alpha_i)}
  \left(\prod_{i=1}^{\nt-1} {\pteam'_i}^{\alpha_i-1}\right)
  \delta\left(1-\sum_{i=1}^{\nt-1} \pteam'_i\right)
  (1-\pteam_{\nt})^{\sum_{i=1}^{\nt-1}\alpha_i-1}{\pteam_{\nt}}^{\alpha_{\nt}-1}
\end{equation}
which, when we marginalize over $\pteam_{\nt}$, gives
\begin{equation}
  f_{\Ptvec'}(\ptvec'|I_{\text{D}\nt})
  = \frac{\Gamma\left(\sum_{i=1}^{\nt-1} \alpha_i\right)}
  {\prod_{i=1}^{\nt-1} \Gamma(\alpha_i)}
  \left(\prod_{i=1}^{\nt-1} {\pteam'_i}^{\alpha_i-1}\right)
  \delta\left(1-\sum_{i=1}^{\nt-1} \pteam'_i\right)
\end{equation}
which is a Dirichlet distribution with the same parameters
$\{\alpha_1,\ldots,\alpha_{\nt-1}\}$.

However, desideratum~\ref{d:reflection} is not satisfied, which we can
see explicitly by considering $\nt=3$ and assuming
$\alpha_1=\alpha_2=\alpha_3\equiv\alpha$, so that
\begin{equation}
  f_{\Ptvec}(\pteam_1,\pteam_2,\pteam_3|I_{\text{D}3})
  \propto (\pteam_1\pteam_2\pteam_3)^{\alpha-1}
  \,\delta(\pteam_1+\pteam_2+\pteam_3-1)
\end{equation}
or equivalently
\begin{equation}
  f_{\Ltvec}(\lteam_1,\lteam_2,\lteam_3|I_{\text{D}3})
  \propto e^{\alpha(\lteam_1+\lteam_2+\lteam_3)}
  \,\delta\left(e^{\lteam_1}+e^{\lteam_2}+e^{\lteam_3}-1\right)
\end{equation}
If we write
\begin{multline}
  \pteam_1+\pteam_2+\pteam_3 = (\pteam_1\pteam_2\pteam_3)^{1/3}
  \left(
    \left(\frac{\pteam_1}{\pteam_2}\frac{\pteam_1}{\pteam_3}\right)^{1/3}
    + \left(\frac{\pteam_2}{\pteam_1}\frac{\pteam_2}{\pteam_3}\right)^{1/3}
    + \left(\frac{\pteam_3}{\pteam_1}\frac{\pteam_3}{\pteam_2}\right)^{1/3}
  \right)
  \\
  = e^{\lteam_1}+e^{\lteam_2}+e^{\lteam_3}
  = e^{\frac{\lteam_1+\lteam_2+\lteam_3}{3}}
  \left(
    e^{\frac{\lpair_{12}+\lpair_{13}}{3}}
    + e^{\frac{-\lpair_{12}+\lpair_{23}}{3}}
    + e^{\frac{-\lpair_{13}-\lpair_{23}}{3}}
  \right)
\end{multline}
we can see
\begin{equation}
  \delta\left(e^{\lteam_1}+e^{\lteam_2}+e^{\lteam_3}-1\right)
  \propto
  \delta\left(\lteam_1+\lteam_2+\lteam_3
    +3\ln\left[
      e^{\frac{\lpair_{12}+\lpair_{13}}{3}}
      + e^{\frac{-\lpair_{12}+\lpair_{23}}{3}}
      + e^{\frac{-\lpair_{13}-\lpair_{23}}{3}}
    \right]
  \right)
\end{equation}
and so marginalizing over the combination $\lteam_1+\lteam_2+\lteam_3$
leaves a prior distribution
\begin{equation}
  f_{\Lpvec}(\lpvec|I_{\text{D}3})
  \propto
  \left(
    e^{\frac{\lpair_{12}+\lpair_{13}}{3}}
    + e^{\frac{-\lpair_{12}+\lpair_{23}}{3}}
    + e^{\frac{-\lpair_{13}-\lpair_{23}}{3}}
  \right)^{-3\alpha}
\end{equation}
We see that, for non-zero $\alpha$,
$f_{\Lpvec}(\lpvec|I_{\text{D}3}) \ne
f_{\Lpvec}(-\lpvec|I_{\text{D}3})$.  This is illustrated explicitly
for $\alpha=\frac{1}{2}$ in \fref{f:t3dir1}.  Another way of
expressing this asymmetry if we'd started with an ``anti-Dirichlet''
prior, i.e., requiring $\{1/\pteam_i\}$ to be Dirichlet distributed,
we'd have got the distribution
\begin{equation}
  f_{\Lpvec}(\lpvec|I_{\text{D}'3})
  \propto
  \left(
    e^{\frac{-\lpair_{12}-\lpair_{13}}{3}}
    + e^{\frac{\lpair_{12}-\lpair_{23}}{3}}
    + e^{\frac{\lpair_{13}+\lpair_{23}}{3}}
  \right)^{-3\alpha}
\end{equation}
\begin{figure}[t!]
  \centering
  \includegraphics[width=0.8\columnwidth]{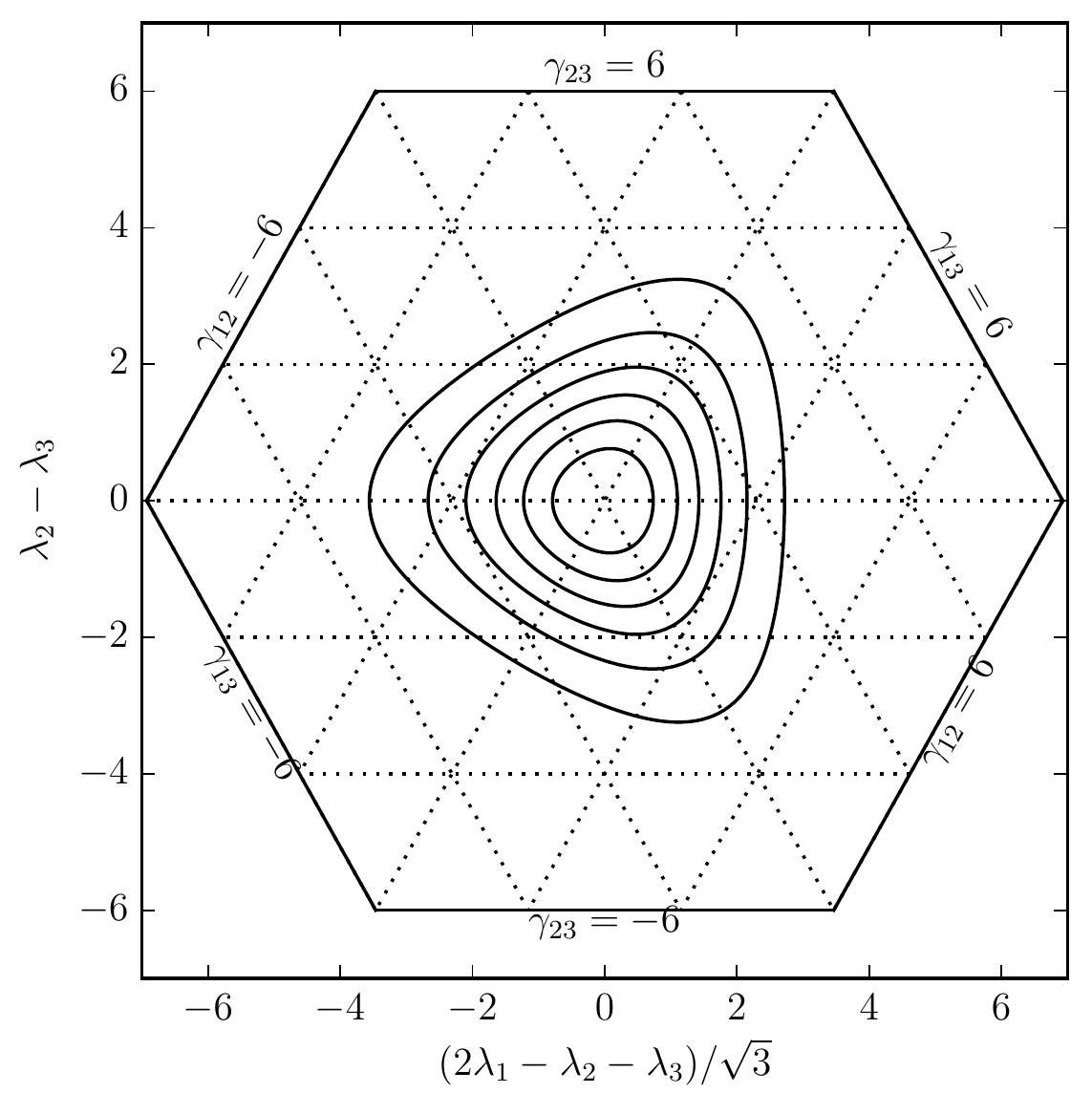}
  \caption{Contour plot of the prior probability distribution
    $f_{\Lpvec}(\lpvec|I_{\text{D}3})$ arising from the Dirichlet
    prior described in \sref{s:dirichlet} when $\alpha=1$ and $\nt=3$.
    We see that the distribution is not invariant under the inversion
    $\lpvec\rightarrow -\lpvec$, and therefore
    desideratum~\ref{d:reflection} is not satisfied.}
  \label{f:t3dir1}
\end{figure}

\subsubsection{Conjugate Prior Families}

\citet{DAVIDSON01121973} construct a conjugate prior family of the form
\begin{equation}
  f_{\Ptvec}(\ptvec|I_{\text{C}}) \propto
  \left(\prod_{i=1}^{\nt}\pteam_i^{\win_i^0}\right)
  \left(
    \prod_{i=1}^{\nt}\prod_{j=i+1}^{\nt}(\pteam_i+\pteam_j)^{-\nnum_{ij}^0}
  \right)
  \delta\left(1-\sum_{i=1}^{\nt} \pteam_i\right)
\end{equation}
where $\nnum_{ii}^0=0$.  In order to satisfy
desideratum~\ref{d:interchange} (interchange of teams), we require
that $\win_i^0=\win^0$ and $\nnum_{ij}^0=\nnum^0$ for all $i$ and
$j\ne i$, so the prior becomes
\begin{equation}
  f_{\Ptvec}(\ptvec|I_{\text{C}}) \propto
  \left(\prod_{i=1}^{\nt}\pteam_i\right)^{\win^0}
  \left(
    \prod_{i=1}^{\nt}\prod_{j=i+1}^{\nt}(\pteam_i+\pteam_j)
  \right)^{-\nnum^0}
  \delta\left(1-\sum_{i=1}^{\nt} \pteam_i\right)
  \ .
\end{equation}
Note that \citet{DAVIDSON01121973} motivate $\win_i^0$ and
$\nnum_{ij}^0$ as coming from a matrix $\wwin_{ij}^0$ (with
$\wwin_{ii}^0=0$) via $\win_i^0=\sum_{i=1}^{\nt}\wwin_{ij}^0$ and
$\nnum_{ij}^0=\wwin_{ij}^0+\wwin_{ji}^0$, which means that in
particular
$\sum_{i=1}^{\nt}\sum_{j=1}^{\nt}\nnum_{ij}^0=2\sum_{i=1}^{\nt}\win_i^0$,
which in the case of single $\nnum^0$ and $\win^0$ parameters would
require $\win^0=(\nt-1)\nnum^0/2$.  We will not impose that condition
at this stage, however.

To convert $f_{\Ptvec}(\ptvec|I_{\text{C}})$ into
$f_{\Ltvec}(\ltvec|I_{\text{C}})$, we note that
\begin{equation}
  \sum_{i=1}^{\nt}\pteam_i
  = \left(\prod_{k=1}^{\nt}\pteam_k\right)^{1/t}
  \sum_{i=1}^{\nt}\left(
    \prod_{j=1}^{\nt} \frac{\pteam_i}{\pteam_j}
  \right)^{1/t}
  = \exp\left(\frac{1}{\nt}\sum_{k=1}^{\nt}\lteam_k\right)
  \sum_{i=1}^{\nt}
  \exp\left(\frac{1}{\nt}\sum_{j=1}^{\nt}\lpair_{ij}\right)
\end{equation}
and since, for $\nt>0$,
\begin{equation}
  \delta\left(1-e^{u/t}\right)
  = \frac{\delta(u)}{\frac{1}{t}e^{u/t}}
  = t\delta(u)
  \ ,
\end{equation}
\begin{equation}
  \delta\left(1-\sum_{i=1}^{\nt} \pteam_i\right)
  =
  \nt\ \delta\left(
    \sum_{k=1}^{\nt}\lteam_k
    + \nt \ln \sum_{i=1}^{\nt}
    \exp\left(\frac{1}{\nt}\sum_{j=1}^{\nt}\lpair_{ij}\right)
  \right)
  \ .
\end{equation}
Similarly, we can write
\begin{equation}
  \begin{split}
    \prod_{i=1}^{\nt}\prod_{j=i+1}^{\nt}(\pteam_i+\pteam_j)
    &=
    \frac{1}{\prod_{k=1}^{\nt}2\pteam_k}
    \sqrt{
      \prod_{i=1}^{\nt}\prod_{j=1}^{\nt}(\pteam_i+\pteam_j)
    }
    \\
    &=
    \frac{1}{2^{\nt}\prod_{k=1}^{\nt}\pteam_k}
    \sqrt{
      \left(\prod_{k=1}^{\nt}\pteam_k\right)^{\nt}
      \left[
        \prod_{i=1}^{\nt}\prod_{j=1}^{\nt}
        \left(1+\frac{\pteam_i}{\pteam_j}\right)
      \right]
    }
    \\
    &= \frac{1}{2^{\nt}}
    \exp\left(\left(\frac{\nt}{2}-1\right)\sum_{k=1}^{\nt}\lteam_k\right)
    \left(
      \prod_{i=1}^{\nt}\prod_{j=1}^{\nt}
      \left[1+e^{\lpair_{ij}}\right]
    \right)^{1/2}
  \end{split}
\end{equation}
which makes the prior (recalling \eqref{e:fltpt})
\begin{equation}
  \begin{split}
    f_{\Ltvec}(\ltvec|I_{\text{C}})
    \propto&
    \exp\left(
      \left[\win^0+1-\nnum^0\left(\frac{\nt}{2}-1\right)\right]
      \sum_{k=1}^{\nt}\lteam_k
    \right)
    \left(
      \prod_{i=1}^{\nt}\prod_{j=1}^{\nt}
      \left[1+e^{\lpair_{ij}}\right]
    \right)^{-\nnum^0/2}
    \\
    &\times \delta\left(
      \sum_{k=1}^{\nt}\lteam_k
      + \nt \ln \sum_{i=1}^{\nt}
      \exp\left(\frac{1}{\nt}\sum_{j=1}^{\nt}\lpair_{ij}\right)
    \right)
  \end{split}
\end{equation}
When we marginalize over $\sum_{k=1}^{\nt}\lteam_k$, the Dirac delta
function sets $\exp\left(\sum_{k=1}^{\nt}\lteam_k\right)$ to
$\left(\sum_{i=1}^{\nt}
  \exp\left(\frac{1}{\nt}\sum_{j=1}^{\nt}\lpair_{ij}\right)\right)^{-\nt}$
and the prior becomes
\begin{equation}
  f_{\Lpvec}(\lpvec|I_{\text{C}})
  \propto
  \left[
    \sum_{i=1}^{\nt}
    \exp\left(\frac{1}{\nt}\sum_{j=1}^{\nt}\lpair_{ij}\right)
  \right]^{-\nt(\win^0+1-\nnum^0[\nt/2-1])}
  \left(
    \prod_{i=1}^{\nt}\prod_{j=1}^{\nt}
    \left[1+e^{\lpair_{ij}}\right]
  \right)^{-\nnum^0/2}
\end{equation}
The quantity in large square brackets is in general not symmetric
under the transformation $\lpvec\rightarrow -\lpvec$, while the
remainder of the expression is.  This means, to satisfy desideratum
\ref{d:reflection} (win-loss inversion), we should have
$\win^0=(\nt-2)\nnum^0/2-1$.  Note that this is not the same as the
condition $\win^0=(\nt-1)\nnum^0/2$ implied by
\citet{DAVIDSON01121973}'s conditions on $\win_i^0$ and
$\nnum_{ij}^0$.  The precise form of their restriction, however, comes
from the fact they wrote down a ``natural'' conjugate prior family for
$f_{\Ptvec}(\ptvec|I_{\text{C}})$; if they had started with
$f_{\Ltvec}(\ltvec|I_{\text{C}})$, they would have ended up with, in
the present notation, $\win^0+1=(\nt-1)\nnum^0/2$, which would also
not satisfy desideratum~\ref{d:reflection}.  However, if we hadn't
imposed the constraint $\sum_{i=1}^{\nt}\pteam_i=1$ (which is clearly
not invariant under $\pteam_i\rightarrow 1/\pteam_i$) in the first
place, the marginalization over $\sum_{k=1}^{\nt}\lteam_k$, would have
rendered $\win^0$ irrelevant and left us with
\begin{equation}
  \label{e:priorDS}
  f_{\Lpvec}(\lpvec|I_{\text{C}})
  \propto
  \left(
    \prod_{i=1}^{\nt}\prod_{j=1}^{\nt}
    \left[1+e^{\lpair_{ij}}\right]
  \right)^{-\nnum^0/2}
\end{equation}
in any event.  We therefore take \eqref{e:priorDS} as the form of
the conjugate prior $f_{\Lpvec}(\lpvec|I_{\text{C}})$, and note that
if $\nnum^0=2$, this reduces to the maximum entropy distribution
\eqref{e:priorme} [see also \eqref{e:meast}] considered in
\ref{s:maxent}.

We can show that \eqref{e:priorDS} violates desideratum~\ref{d:elim}
for any $\nnum^0\in(0,\infty)$ by considering the prior predictive
distribution and invoking Lemma~\ref{l:predelim}.  First, note that
for $\nt=2$, \eqref{e:priorDS} becomes
\begin{equation}
  \label{e:priorDS2}
  f_{\Lpvec}(\lpvec|I_{\text{C}2})
  \propto
  \left(
    \left[1+e^{\lpair_{12}}\right]
    \left[1+e^{\lpair_{21}}\right]
  \right)^{-\nnum^0/2}
\end{equation}
which is just the beta/generalized logistic prior \eqref{e:betalog}
with $\alpha=\beta=\nnum^0/2$.  Therefore, if we define $D$ to be a
set of results for which $\wwin_{12}=1=\wwin_{21}$, and let
$\nnum_{12}=2$ and $\nnum_{i3}=0$, \eqref{e:predbeta} implies
\begin{equation}
  p(D|\nnumvec,I_{\text{C}2})
  = \frac{\Gamma(\nnum^0)\Gamma(\frac{\nnum^0}{2}+1)\Gamma(\frac{\nnum^0}{2}+1)}
  {\Gamma(\frac{\nnum^0}{2})\Gamma(\frac{\nnum^0}{2})\Gamma(\nnum^0+2)}
  = \frac{1}{4(1+\frac{1}{\nnum^0})}
\end{equation}
We evaluate $p(D|\nnumvec,I_{\text{C}3})$ numerically for a range of
$\nnum^0$ values, plotted in \fref{f:predDS}, and find that for any
$0<\nnum^0<\infty$,
$p(D|\nnumvec,I_{\text{C}3})>p(D|\nnumvec,I_{\text{C}2})$
\begin{figure}
  \centering
  \includegraphics[width=0.8\columnwidth]{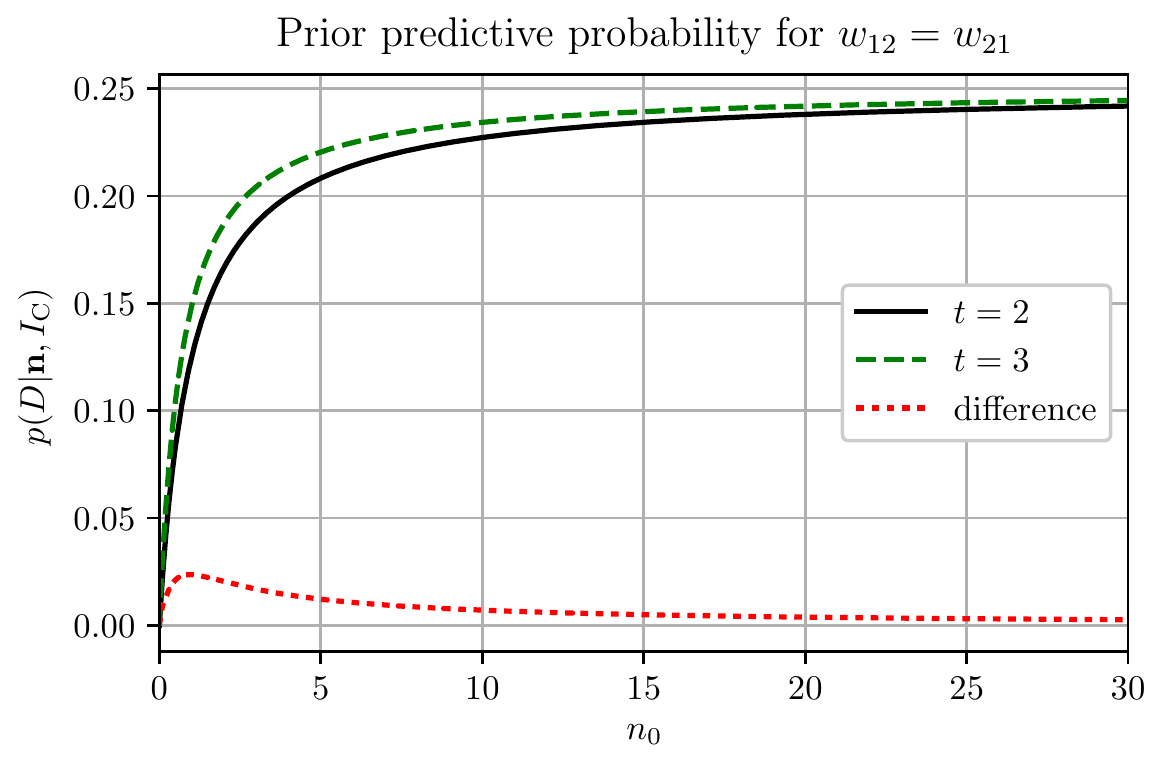}
  \caption{Prior predictive probability $p(D|\nnumvec,I_{\text{C}})$
    for the conjugate prior family of \citet{DAVIDSON01121973}, for a
    sequence of results in which $\wwin_{12}=\wwin_{21}=1$ and
    $\nnum_{12}=2$.  In order to satisfy
    desiderata~\ref{d:interchange} and~\ref{d:reflection}, we require
    $\nnum_{ij}^0=\nnum^0$ and $\win^0=(\nt-2)\nnum^0/2-1$, which
    produces the prior \eqref{e:priorDS}.  For any finite positive
    $\nnum^0$, the prior predictive probability from the three-team
    prior is larger than that of the two-team prior, indicating that
    desideratum~\ref{d:elim} is not satisfied.  (For $\nnum^0=0$ both
    versions reduce to the Haldane prior and
    $p(D|\nnumvec,I_{\text{C}})=0$.  As $\nnum^0\rightarrow\infty$,
    they become delta functions at $\lpvec=\zerovec$ and
    $p(D|\nnumvec,I_{\text{C}})\rightarrow 0.25$.)}
  \label{f:predDS}
\end{figure}

\subsubsection{Multivariate Gaussian Distribution}

\citet{Leonard1977} proposed a multivariate Gaussian prior on the
$\{\lteam_i\}$ of the form
\begin{equation}
  f_{\Ltvec}(\ltvec|I_{\text{G}}) \propto
  \exp\left(
    -\frac{1}{2}\sum_{i=1}^{\nt}\sum_{j=1}^{\nt}
    (\lteam_i-\mu_i)[\sigma^{-2}]_{ij}(\lteam_j-\mu_j)
  \right)
\end{equation}
where $\{[\sigma^{-2}]_{ij}\}$ are the elements of the inverse of a
positive semi-definite covariance matrix with elements
$\{[\sigma^2]_{ij}\}$.

Desideratum~\ref{d:proper} will be satisfied if the covariance matrix
$\{[\sigma^2]_{ij}\}$ is positive definite, so that the prior is
normalizable.

Desideratum~\ref{d:interchange} requires that all of the $\{\mu_i\}$
have the same value $\mu$, all of the variances $\{[\sigma^2]_{ii}\}$
have the same value $\sigma^2$, and all of the cross-covariances
$\{[\sigma^2]_{ij}|i\ne j\}$ have the same value $\rho\sigma^2$.  In
order for the matrix $\{[\sigma^2]_{ij}\}$ to be positive definite we
must have $\sigma^2>0$ and $-\frac{1}{\nt-1}<\rho<1$.

These conditions guarantee that desideratum~\ref{d:reflection} is
satisfied.

Since the distribution $f_{\Lpvec}(\lpvec|I_{\text{G}})$ is unchanged
by the transformation $\lambda_i\rightarrow\lambda_i-\mu$, we can
assume without loss of generality that $\mu=0$.  We are thus left with
$\rho$ and $\sigma^2$ as the adjustable parameters of the
distribution.  However, if we make the transformation
$\lambda_i\rightarrow\lambda_i+a\sum_{j=1}^{\nt}\lambda_j$, which
leaves $f_{\Lpvec}(\lpvec|I_{\text{G}})$ unchanged,
$f_{\Ltvec}(\ltvec|I_{\text{G}})$ becomes a multivariate Gaussian with
covariance matrix
\begin{equation}
  \sigma^2
  \left[
    \rho+\delta_{ij}(1-\rho)
    + a(2+a\nt)(\nt+1-\rho)
  \right]
\end{equation}
if $a=-\frac{1}{t}\left(1+\sqrt{\frac{1-\rho}{\nt\rho+1-\rho}}\right)$
or $a=-\frac{1}{t}\left(1-\sqrt{\frac{1-\rho}{\nt\rho+1-\rho}}\right)$
the covariance matrix becomes diagonal, with a variance equal to
$(1-\rho)\sigma^2$.  Either value for $a$ is guaranteed to be real by
the conditions on $\rho$ which ensure a positive definite correlation
matrix.  Thus $f_{\Lpvec}(\lpvec|I_{\text{G}})$ is equivalent to a
product of independent Gaussian distributions for each $\lteam_i$.
For simplicity, we refer to the variance of each of these
distributions as $\sigma^2$ rather than $(1-\rho)\sigma^2$.

Since the prior $f_{\Lpvec}(\lpvec|I_{\text{G}})$ is equivalent to
independent distributions on the $\{\lambda_i\}$, it is invariant
under elimination of teams, and satisfies desideratum~\ref{d:elim}.

\subsubsection{Separable Priors}

Thus far, the only prior considered to satisfy all four of our
desiderata is the Gaussian prior which is equivalent to
\begin{equation}
  f_{\Ltvec}(\ltvec|I_{\text{G}})
  = \frac{1}{(2\pi\sigma^2)^{\nt/2}}
  \exp
  \left(
    -\sum_{i=1}^{\nt}\frac{\lteam_i^2}{2\sigma^2}
  \right)
\end{equation}
This is an example of a separable prior of the form
\begin{equation}
  f_{\Ltvec}(\ltvec|I) = \prod_{i=1}^{\nt}f_{\Lteam_i}(\ltvec_i|I)
\end{equation}
A prior of this form, which assigns prior pdfs to the strengths (or
log-strengths) of the teams is guaranteed by its construction to
satisfy desideratum~\ref{d:elim} (invariance under team-elimination).
It will also satisfy desideratum~\ref{d:interchange} (interchange) if
the distributions for the different $\{\ltvec_i\}$ are identical
($f_{\Lteam_i}(\ltvec_i|I)=f_{\Lteam}(\ltvec_i|I)$),
desideratum~\ref{d:reflection} (win-loss interchange) if the
distribution $f_{\Lteam}(\ltvec_i|I)$ is even
($f_{\Lteam}(-\ltvec_i|I)=f_{\Lteam}(\ltvec_i|I)$) and
desideratum~\ref{d:proper} (normalizable) if $f_{\Lteam}(-\ltvec|I)$
is a proper prior.

\subsubsection{Beta-Separable Priors}

One family of separable priors can be constructed by defining
\begin{equation}
  \zeta_i=\frac{\pteam_i}{1+\pteam_i}=(1+e^{-\lteam_i})^{-1}
\end{equation}
and assuming that $\zeta_i$ obeys a Beta distribution.
Explicitly,
\begin{equation}
  f_{\Zeta_i}(\zeta_i|I_{\text{B}})
  \propto \zeta_i^{\alpha_i-1}(1-\zeta_i)^{\beta_i-1}
\end{equation}
Since $\zeta_i=\text{logistic}(\lteam_i)$, just as
$\ppair_{ij}=\text{logistic}(\lpair_{ij})$, the prior on $\lteam_i$ is
a generalized logistic distribution of Type IV
\citep{Prentice:1976,Nassar:2012}:
\begin{equation}
  f_{\Lteam_i}(\lteam_i|I_{\text{B}}) \propto
  (1+e^{-\lteam_i})^{-\alpha_i}(1+e^{\lteam_i})^{-\beta_i}
\end{equation}
To enforce desideratum~\ref{d:reflection}, we require
$\alpha_i=\beta_i$, and for desideratum~\ref{d:interchange}, we
require that all $\alpha_i$ and $\beta_i$ parameters are the same,
$\alpha_i=\beta_i=\eta$, which makes the prior a generalized logistic
distribution of Type III.
\begin{equation}
  f_{\Lteam_i}(\lteam_i|I_{\text{B}}) \propto
  (1+e^{-\lteam_i})^{-\eta}(1+e^{\lteam_i})^{-\eta}
\end{equation}
An appealing feature of Beta-separable prior is that its functional
form is similar to the likelihood
\begin{equation}
  p(D|\ltvec)
  = \prod_{i=1}^{\nt} \prod_{j=i+1}^{\nt}
  (1+e^{-\lpair_{ij}})^{-\wwin_{ij}}(1+e^{\lpair_{ij}})^{\nnum_{ij}-\wwin_{ij}}
\end{equation}
In particular, the posterior is proportional to the likelihood
function which would arise by adding to the actual game results a set
of ``fictitious games'' corresponding to $\eta$ wins and $\eta$ losses
for each actual team against a fictitious team assumed to have a
strength of $1$.  So any method for obtaining maximum likelihood
estimates such as that of \citet{Ford:1957} could be adapted to
obtaining maximum a priori estimates with this prior.  This method,
with $\eta=\frac{1}{2}$, has been used by \citet{KRACH1993} to ensure
regularity of the estimates of Bradley-Terry strengths.  Another
``obvious'' choice is $\eta=1$, which is equivalent to a uniform prior
on $\zeta_i$.  (The Haldane prior is $\eta=0$.)

\section{Conclusions}

We have considered various families of prior distributions for team
strengths in the Bradley-Terry model.  Motivated by the application of
a Bayesian Bradley-Terry model to rate teams based only on their game
results, we have evaluated these priors according to the desiderata of
invariance under interchange of teams (\ref{d:interchange}),
interchange of winning and losing (\ref{d:reflection}) and elimination
of irrelevant teams from the model (\ref{d:elim}), as well as
normalizability (\ref{d:proper}).  A Haldane-like prior of complete
ignorance is not normalizable (violation of \ref{d:proper}), although
it satisfies the other desiderata.  A prior based on maximum entropy
arguments, as well as one from the conjugate family of
\citet{DAVIDSON01121973} which is required to obey the other
desiderata, will depend on the number of teams for which it was
constructed (violation of \ref{d:elim}).  The same is true for the
Jeffreys prior.  A Dirichlet prior on the team strengths
\citep{Chen19849} can be made to satisfy the other desiderata, but
will not be invariant under interchange of the definitions of winning
and losing (violation of \ref{d:reflection}).  Distributions can be
constructed which satisfy the desiderata by imposing independent
priors on the strengths of all of the teams.  In particular, a
multivariate Gaussian in the log-strengths \citep{Leonard1977} which
satisfies the desiderata is equivalent to identical independent
Gaussian priors on each of the log-strengths.  Another simple family
of separable prior distributions imposes independent generalized
logistic distributions on the log-strengths.  In each of these last
two cases, a single parameter remains.  \citet{Phelan2017} consider
the relationship between the two, and propose a hierarchical method to
estimate these parameters rather than assuming values for them.

\section*{Acknowledgments}

The author wishes to thank Kenneth Butler and Gabriel Phelan for
useful discussions.

\end{document}